\documentclass[12pt,a4paper]{article}

\usepackage[margin=2.5cm]{geometry}

\usepackage[T1]{fontenc}
\usepackage[utf8]{inputenc}
\usepackage{lmodern}
\usepackage{microtype}

\usepackage{amsmath,amssymb,amsthm}

\allowdisplaybreaks
\numberwithin{equation}{section}

\usepackage{xcolor}
\usepackage{cite}
\usepackage[bottom]{footmisc}

\usepackage[
colorlinks=true,
linkcolor=black,
citecolor=red,
urlcolor=blue
]{hyperref}

\theoremstyle{plain}

\newtheorem{theorem}{Theorem}[section]
\newtheorem{pro}[theorem]{Proposition}
\newtheorem{lem}[theorem]{Lemma}
\newtheorem{cor}[theorem]{Corollary}

\theoremstyle{definition}
\newtheorem{rem}[theorem]{Remark}

\date{}
\begin{document}
\title
{Stability of 2D MHD with Only Horizontal Dissipation near a Background Magnetic Field on $\mathbb{R}^2$}

\author{
	Jiahong Wu\footnote{Department of Mathematics, University of Notre Dame, Notre Dame, IN 46556, USA. Email: jwu29@nd.edu}
	\and
	Mengxin Yan\footnote{School of Mathematics, Shandong University, Jinan, Shandong 250000, P. R. China. Email: 202411867@mail.sdu.edu.cn}
	\and
	Ning Zhu\footnote{School of Mathematics, Shandong University, Jinan, Shandong 250000, P. R. China. Email: ning.zhu@sdu.edu.cn}
	\and
    Yi Zhu\footnote{Department of Mathematics,
	East China University of Science and Technology,
	Shanghai
	200237, P. R. China. Email: zhuyim@ecust.edu.cn}	
}

		\maketitle
	\begin{abstract}
	Whether the two-dimensional Navier--Stokes equations with dissipation
	in only one direction are stable in $H^2$ on the whole plane is a
	well-known open problem: without transverse dissipation, a cubic
	interaction fails to be time-integrable. We show that a magnetic field
	removes this obstruction. For the 2D incompressible MHD equations on
	$\mathbb R^2$ in which both the viscosity $\nu$ and the magnetic
	diffusivity $\mu$ act only in the horizontal direction, we prove the
	global nonlinear $H^2$ stability of every background field $\gamma e_2$
	aligned with the non-dissipative direction, with a stability threshold
	that is explicit in $\nu$, $\mu$ and $\gamma$. The stabilizing mechanism
	is genuinely nonlinear: at the linear level the background field
	produces no damping at all in the vertical direction. Instead, in
	Els\"asser variables the obstruction becomes an interaction between
	Alfv\'en waves travelling in opposite directions, and we identify
	nonlinear dissipative quantities that measure the overlap of these
	waves and are time-integrable although neither wave decays. We then
	sharpen the threshold by a space--time div--curl estimate for balance
	laws in the $(t,x_2)$-plane. For $\nu=\mu\leq1\leq\gamma$, the admissible
	perturbations are of size $\nu^{2/3}\gamma^{1/3}$, improving the direct
	scale $\nu$ and growing without bound as the background field
	strengthens.
\end{abstract}

\medskip
\noindent\textbf{2020 Mathematics Subject Classification.}
 35Q35, 35B35, 76E25.

\medskip
\noindent\textbf{Keywords.}
Incompressible MHD equations, horizontal dissipation, background magnetic
field, nonlinear stability, stability threshold, Els\"asser variables,
Alfv\'en waves, div--curl estimate.

\section{Introduction}

Consider the two-dimensional Navier--Stokes equations with dissipation in
the horizontal direction only,
\begin{align}\label{ns}
	\left\{
	\begin{aligned}
		&\partial_t u-\partial_1^2u+u\cdot\nabla u+\nabla p=0,
		\qquad x\in\mathbb R^2,\ t>0,\\
		&\nabla\cdot u=0,\\
		&u(x,0)=u_0(x).
	\end{aligned}
	\right.
\end{align}
Since the vorticity $\omega=\partial_1u_2-\partial_2u_1$ obeys a transport
equation with partial diffusion, its $L^p$ norms are non-increasing, and
smooth solutions of \eqref{ns} exist for all time. The difficulty lies
elsewhere: no uniform-in-time $H^2$ bound is known on $\mathbb R^2$, even
for small data, and the $H^2$ stability of the zero solution remains open
\cite{DongWuXuZhu2021}.

The obstruction is a single cubic interaction. The $H^1$ estimate for
$\omega$ reads
\begin{equation*}
	\frac{d}{dt}\|\nabla\omega\|_{L^2}^2
	+2\|\partial_1\nabla\omega\|_{L^2}^2
	=
	-2\sum_{j,k=1}^2\int_{\mathbb R^2}
	\partial_ku_j\,\partial_j\omega\,\partial_k\omega\,dx ,
\end{equation*}
and the term with $j=k=2$ is, by incompressibility,
\begin{equation}\label{bad-term-NS}
-2\int_{\mathbb R^2}\partial_2u_2\,(\partial_2\omega)^2\,dx
=
2\int_{\mathbb R^2}\partial_1u_1\,(\partial_2\omega)^2\,dx
\leq
2\|\partial_1u_1\|_{L^\infty}\|\partial_2\omega\|_{L^2}^2 .
\end{equation}
The factor $\partial_2\omega$ carries no horizontal derivative, so it is
merely bounded in time, while the horizontal dissipation controls
$\|\partial_1u\|_{L^\infty}$ only in $L^2_t$, not in $L^1_t$. On the strip
$\mathbb T\times\mathbb R$ a Poincar\'e inequality in the periodic
variable removes this obstruction; on $\mathbb R^2$ no such mechanism is
available.

The purpose of this paper is to show that coupling with a magnetic field
supplies exactly the transverse control that dissipation cannot. We
consider the two-dimensional incompressible magnetohydrodynamic (MHD)
equations with horizontal viscosity and horizontal magnetic diffusion,
\begin{align}\label{MHD-total-intro}
	\left\{
	\begin{aligned}
		&\partial_t U-\nu\partial_1^2U+U\cdot\nabla U-B\cdot\nabla B+\nabla P=0,
		\qquad x\in\mathbb R^2,~t>0,\\
		&\partial_t B-\mu\partial_1^2B+U\cdot\nabla B-B\cdot\nabla U=0,\\
		&\nabla\cdot U=\nabla\cdot B=0,\\
		&U(x,0)=U_0(x),~B(x,0)=B_0(x),
	\end{aligned}
	\right.
\end{align}
where $U=(U_1,U_2)$ is the velocity, $B=(B_1,B_2)$ the magnetic field, $P$
the total pressure, and $\nu>0$, $\mu>0$ are the viscosity and the
magnetic diffusivity. The MHD equations govern electrically conducting
fluids such as plasmas and liquid metals, in which dissipative processes are
frequently strongly anisotropic; \eqref{MHD-total-intro} is the extreme
configuration in which both dissipations act in the same single direction.

For fully dissipative 2D MHD, global regularity is classical
\cite{DuvautLions1972,SermangeTemam1983}, and many partially dissipative
variants have been studied; see, for instance,
\cite{CaoWu2011,RenWuXiangZhang2014,CaoWuYuan2014,LinXuZhang2015}. For
\eqref{MHD-total-intro} on $\mathbb R^2$, Cao, Regmi and Wu
\cite{CaoRegmiWu2013} obtained global a priori bounds and regularity
criteria, while global regularity for general data remains open. On
$\mathbb T\times\mathbb R$, Paicu and Zhu \cite{PaicuZhu2021} proved global
regularity for small data, relying on the Poincar\'e inequality in the
periodic direction.

We study the stability of the steady state
$U^{(0)}=0$, $B^{(0)}=\gamma e_2=(0,\gamma)$. Since
$(U,B)\mapsto(U,-B)$ leaves the system invariant, we may assume
$\gamma>0$. Writing $U=u$, $B=\gamma e_2+b$, the perturbation satisfies
\begin{align}\label{MHD-horizontal dissipation}
	\left\{
	\begin{aligned}
		&\partial_t u-\nu\partial_1^2u+u\cdot\nabla u-b\cdot\nabla b+\nabla p
		=\gamma\partial_2b,\\
		&\partial_t b-\mu\partial_1^2b+u\cdot\nabla b-b\cdot\nabla u
		=\gamma\partial_2u,\\
		&\nabla\cdot u=\nabla\cdot b=0,\\
		&u(x,0)=u_0(x),\qquad b(x,0)=b_0(x).
	\end{aligned}
	\right.
\end{align}
We set
\[
\nu_m=\min\{\nu,\mu\},
\qquad
\nu_M=\max\{\nu,\mu\}.
\]
The background field is aligned with the non-dissipative direction
$x_2$. The linear terms $\gamma\partial_2b$ and $\gamma\partial_2u$ provide
no dissipation; instead, they generate wave propagation along $x_2$, which
is the direction left uncontrolled by the dissipation.

This wave structure can be made explicit. Let $\mathbb P$ be the Leray
projection onto divergence-free vector fields and set
\[
N_u\triangleq\mathbb P\big(b\cdot\nabla b-u\cdot\nabla u\big),
\qquad
N_b\triangleq b\cdot\nabla u-u\cdot\nabla b,
\]
so that \eqref{MHD-horizontal dissipation} reads
$\partial_tu-\nu\partial_1^2u-\gamma\partial_2b=N_u$ and
$\partial_tb-\mu\partial_1^2b-\gamma\partial_2u=N_b$. Applying
$\partial_t-\mu\partial_1^2$ to the first equation and
$\partial_t-\nu\partial_1^2$ to the second, and using the other equation to
eliminate the coupling term, we find that $u$ and $b$ satisfy the
wave-type equations
\begin{align}\label{wave}
	\left\{
	\begin{aligned}
		&\partial_t^2u-(\nu+\mu)\partial_1^2\partial_tu+\nu\mu\partial_1^4u
		-\gamma^2\partial_2^2u
		=(\partial_t-\mu\partial_1^2)N_u+\gamma\partial_2N_b,\\
		&\partial_t^2b-(\nu+\mu)\partial_1^2\partial_tb+\nu\mu\partial_1^4b
		-\gamma^2\partial_2^2b
		=(\partial_t-\nu\partial_1^2)N_b+\gamma\partial_2N_u.
	\end{aligned}
	\right.
\end{align}
The background field thus produces the term $-\gamma^2\partial_2^2$, which
acts precisely in the direction that lacks dissipation. In earlier work on
the stabilizing effect of a background magnetic field, see for instance
\cite{JiWu2020,BoardmanLinWu2020,LinJiWuYan2020,LinWuZhu2025}, a wave
structure of this
kind was used to generate additional smoothing and decay.

When we try to use \eqref{wave} directly in the energy estimates, however,
we encounter a basic difficulty. The natural way to extract control of
$\partial_2u$ is to use the equation for $b$ to trade the coupling term
$\gamma\partial_2u$ for a time derivative, that is, to consider the cross
quantity $(b,\partial_2u)_{L^2}$. A direct computation gives
\begin{equation}\label{cross-term}
	\begin{split}
	\gamma\|\partial_2u\|_{L^2}^2-\gamma\|\partial_2b\|_{L^2}^2
	={}&\frac{d}{dt}(b,\partial_2u)_{L^2}
	-\mu(\partial_1^2b,\partial_2u)_{L^2}
	+\nu(\partial_2b,\partial_1^2u)_{L^2}\\
	&-(N_b,\partial_2u)_{L^2}
	+(\partial_2b,N_u)_{L^2}.
	\end{split}
\end{equation}
Thus the gain $\gamma\|\partial_2u\|_{L^2}^2$ comes with the loss
$-\gamma\|\partial_2b\|_{L^2}^2$, which could only be absorbed by
dissipation of $b$ in the $x_2$-direction; but the equation for $b$ has no
vertical diffusion either. The symmetric computation, based on
$(u,\partial_2b)_{L^2}$, produces $-\gamma\|\partial_2u\|_{L^2}^2$ in the
same way. The same fact is visible at the linear level: the eigenvalues
$\lambda$ of the linearization of \eqref{wave} at frequency
$\xi=(\xi_1,\xi_2)$ solve
$\lambda^2+(\nu+\mu)\xi_1^2\lambda+\nu\mu\xi_1^4+\gamma^2\xi_2^2=0$, and one
checks that
\[
-\nu_M\xi_1^2\leq\operatorname{Re}\lambda\leq-\nu_m\xi_1^2 .
\]
The damping rate of every Fourier mode is therefore governed by $\xi_1$
alone, exactly as in the absence of the magnetic field, and the modes with
$\xi_1=0$ merely oscillate. The background field produces no additional
linear dissipation, and its stabilizing effect must be sought at the
nonlinear level.

The left-hand side of \eqref{cross-term} indicates how to proceed. It
equals $\gamma(\partial_2(u+b),\partial_2(u-b))_{L^2}$, a correlation
between the two combinations $u\pm b$. This leads us to the Els\"asser
variables
\[
\Omega^\pm=u\pm b,
\]
which diagonalize the linear coupling. In terms of $\Omega^\pm$, system
\eqref{MHD-horizontal dissipation} becomes
\begin{align}\label{Elsasser-system}
	\left\{
	\begin{aligned}
		&\partial_t\Omega^+
		-\nu_+\partial_1^2\Omega^+
		-\nu_-\partial_1^2\Omega^-
		+\Omega^-\cdot\nabla\Omega^+
		+\nabla p
		=\gamma\partial_2\Omega^+,\\
		&\partial_t\Omega^-
		-\nu_+\partial_1^2\Omega^-
		-\nu_-\partial_1^2\Omega^+
		+\Omega^+\cdot\nabla\Omega^-
		+\nabla p
		=-\gamma\partial_2\Omega^-,\\
		&\nabla\cdot\Omega^+
		=\nabla\cdot\Omega^-=0,\\
		&\Omega^\pm(x,0)=\Omega^\pm_0(x),
	\end{aligned}
	\right.
\end{align}
where
\[
\nu_+=\frac{\nu+\mu}{2},
\qquad
\nu_-=\frac{\nu-\mu}{2},
\qquad
\Omega^\pm_0=u_0\pm b_0.
\]
If the dissipative and nonlinear terms are neglected, then
$(\partial_t-\gamma\partial_2)\Omega^+=0$ and
$(\partial_t+\gamma\partial_2)\Omega^-=0$: the two Els\"asser modes are
Alfv\'en waves travelling in opposite directions along the background
field. Moreover, the nonlinearity couples $\Omega^+$ only with $\Omega^-$
(this includes the pressure, since
$-\Delta p=\nabla\cdot\nabla\cdot(\Omega^-\otimes\Omega^+)$); there is no
self-interaction.

Even in these variables, the Navier--Stokes obstruction reappears. In the
$H^2$ estimate for \eqref{Elsasser-system}, most cubic terms contain at
least two horizontal derivatives, either directly or through the
divergence-free identities $\partial_2\Omega^\pm_2=-\partial_1\Omega^\pm_1$,
and can be bounded by the energy and the dissipation. The exception is
\[
N_{51}=\big(\partial_2^2\Omega^-_1\,\partial_1\Omega^+_1,\;
\partial_2^2\Omega^+_1\big)_{L^2},
\]
which, like \eqref{bad-term-NS}, contains only one horizontal derivative:
two of its three factors are merely bounded in time. What distinguishes
$N_{51}$ from \eqref{bad-term-NS} is that these two factors belong to
waves travelling in opposite directions. We exploit this in two steps.

First, we use the equation for $\Omega^-$ to replace the transverse
derivative $\gamma\partial_2\Omega^-$ by $-\partial_t\Omega^-$ plus
dissipative and nonlinear terms, integrate by parts in time, and then use
the equation for $\Omega^+$ to convert the resulting time derivative of
$\Omega^+$ back into a transverse derivative. Because $\Omega^+$ and
$\Omega^-$ propagate in opposite directions, the principal part of $N_{51}$
reappears with the sign that allows it to be moved to the left-hand side;
for co-propagating waves the two contributions would cancel and the
identity would carry no information (see the proof of
Proposition~\ref{prop:N51}). A related substitution, in which the
equation for $b_2$ is used to replace $\partial_2u_2$ in the most
dangerous cubic term, was used in \cite[Proposition~2.2]{DingPanZhu2025}.

Second, this procedure, as well as the treatment of the remaining parts of
the critical terms, leaves quartic space--time quantities that are
controlled neither by the energy nor by the viscous dissipation, namely
\begin{align}\label{def-E1}
	E_1^\pm(t)
	\triangleq
	\sum_{k=0}^1
	\int_0^t\int_{\mathbb R^2}
	|\partial_2^k\Omega_1^\pm|^2
	|\partial_2^2\Omega_1^\mp|^2
	\,dx\,d\tau .
\end{align}
These are the nonlinear dissipative quantities of the problem. They
measure the overlap of two counter-propagating waves, and since such waves
overlap only for a finite time, they remain bounded although neither
factor decays. To prove this, we integrate the density
$|\partial_2^k\Omega^+_1|^2$ in $x_2$ from $-\infty$ to obtain a weight
$W(x_2,t)$ and consider
$Q(t)=\int_{\mathbb R^2}W\,|\partial_2^2\Omega^-_1|^2\,dx$. Because the two
waves travel in opposite directions, the interaction density
$\int_{\mathbb R^2}|\partial_2^k\Omega^+_1|^2|\partial_2^2\Omega^-_1|^2\,dx$
appears in $Q'(t)$ with a definite sign, while $Q(t)$ itself is bounded by
the energy. Hence $E_1^\pm$ is controlled by the $H^2$ energy up to
higher-order terms. Interaction quantities of this type were introduced
by Ding, Pan and Zhu \cite{DingPanZhu2025} for the viscous non-resistive
MHD system on $\mathbb R^2$. There, the space--time integrals
$\int_0^t\!\int_{\mathbb R^2}|u_1\pm b_1|^2|\partial_1^2b_2|^2\,dx\,d\tau$,
built from the first components of the Els\"asser variables, are
controlled through the weight
$\int_{-\infty}^{x_2}|u_1\pm b_1|^2\,dy_2$, and this is what allows
global well-posedness for data that are small only in $H^2$. We extend
this idea to the full Els\"asser system \eqref{Elsasser-system}, in
which both the viscosity and the magnetic diffusion act only
horizontally. The construction is also in the spirit of Glimm's
interaction potential for hyperbolic systems \cite{Glimm1965} and is
related to the weighted energy estimates for Alfv\'en waves in
\cite{HeXuYu2018,CaiLei2018,WeiZhang2017}. Coupling $E_1^\pm$ to the $H^2$
energy, with the weight $\nu_m^{-1}$, closes the a priori estimates and
yields our first result.

\begin{theorem}\label{thm}
	Let $\nu,\mu,\gamma>0$, and let $(u_0,b_0)\in H^2(\mathbb R^2)$ be
	divergence free. There exist absolute constants $c_0, C_0>0$ such that,
	if
	\[
	\|u_0\|_{H^2}+\|b_0\|_{H^2}
	\leq
	c_0\min\left\{
	\nu_m,
	\frac{\nu_m^{\frac12}}{\nu_M^{\frac12}}\gamma
	\right\},
	\]
	then the MHD system \eqref{MHD-horizontal dissipation} admits a unique
	global solution $(u,b)\in C([0,\infty);H^2(\mathbb R^2))$ with
	$\partial_1u,\partial_1b\in L^2(0,\infty;H^2(\mathbb R^2))$, satisfying
	\[
	\begin{aligned}
	&\sup_{t\geq0}
	\left(
	\|u(t)\|_{H^2}+\|b(t)\|_{H^2}
	\right)+
	\left(	\int_0^\infty
	\big(\nu \|\partial_1 u (t )\|_{H^2}^2
	+ \mu \|\partial_1b (t)\|_{H^2}^2\big)
	\,dt\right)^{\frac12}\\
	&\qquad\leq
	C_0
	\left(
	\|u_0\|_{H^2}+\|b_0\|_{H^2}
	\right).
	\end{aligned}
	\]
\end{theorem}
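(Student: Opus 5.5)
The proof is a bootstrap argument built on a priori estimates for the Elsässer system \eqref{Elsasser-system}. Local well-posedness in $H^2$ is standard, since the system is a transport system with partial dissipation and the $H^2$ energy estimate closes locally because $H^2\subset W^{1,\infty}$. Uniqueness follows from an $L^2$ estimate for the difference of two solutions. It therefore suffices to show the following: if on $[0,T]$ the quantity
\[
\mathcal E(t)=\sup_{\tau\le t}\|\Omega^\pm(\tau)\|_{H^2}^2+\nu_m\int_0^t\|\partial_1\Omega^\pm\|_{H^2}^2\,d\tau+\nu_m^{-1}\big(\text{weighted }E_1^\pm(t)\big)
\]
satisfies $\mathcal E\le 2C\mathcal E(0)$, then in fact $\mathcal E\le C\mathcal E(0)$. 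Here I work with the symmetric dissipation, noting that $\nu_+\ge|\nu_-|$ with $\nu_+-|\nu_-|=\nu_m$, so the dissipative part of the energy identity gives $\nu_m\|\partial_1\nabla^k\Omega^\pm\|^2$.

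\textbf{Step 1 ($H^2$ energy).} Test $\nabla^k$ of \eqref{Elsasser-system} against $\nabla^k\Omega^\pm$ for $k=0,1,2$. The Alfvén terms $\pm\gamma\partial_2$ are antisymmetric and drop out. The $L^2$ and $\dot H^1$ levels close directly. At the $\dot H^2$ level I enumerate the cubic terms $(\partial^\alpha\Omega^\mp\cdot\nabla\partial^\beta\Omega^\pm,\partial^2\Omega^\pm)$ and use $\partial_2\Omega_2^\pm=-\partial_1\Omega_1^\pm$ together with anisotropic inequalities of the form $\|fg\|_{L^2}\lesssim\|f\|^{1/2}\|\partial_1f\|^{1/2}\|g\|^{1/2}\|\partial_2 g\|^{1/2}$. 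Every term with two horizontal derivatives is then bounded by $\mathcal E^{1/2}\cdot\nu_m^{-1}\cdot(\text{dissipation})$, which is harmless when $\|\Omega_0\|_{H^2}\le c_0\nu_m$. The leftover terms are of the type $N_{51}$: they contain a single $\partial_1$, with two factors $\partial_2^2\Omega_1^\mp,\partial_2^2\Omega_1^\pm$ that are only bounded in time. The lower-order variants of these terms, with $\partial_2^k\Omega_1$ in place of the second-derivative factors, are bounded by Cauchy--Schwarz as $\|\partial_1\Omega\|_{L^2_tL^\infty}(E_1)^{1/2}$, or by similar products, and so are controlled by the $E_1^\pm$ part of $\mathcal E$.

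\textbf{Step 2 (the critical term $N_{51}$; main obstacle).} Here I would follow the substitution described in the introduction. First I write $\gamma\partial_2\Omega^-=-\partial_t\Omega^-+\nu_+\partial_1^2\Omega^-+\nu_-\partial_1^2\Omega^+-\Omega^+\cdot\nabla\Omega^-+\dots$. To do this, write $\partial_2^2\Omega_1^-=\partial_2(\partial_2\Omega_1^-)$ and integrate by parts in $x_2$ so that one $\partial_2$ hits $\Omega^-$ in the form $\gamma\partial_2\Omega^-_1$ at lower order, with $\gamma^{-1}$ pulled out. Then I integrate by parts in time and reconvert $\partial_t\Omega^+=\gamma\partial_2\Omega^++\dots$. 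Because the propagation directions are opposite, the principal part returns with a coefficient $-1$ relative to the original, giving $2\int_0^t N_{51}=$ a boundary term $\gamma^{-1}(\cdots)|_0^t$ plus remainders. The boundary term is cubic and is bounded by $\gamma^{-1}\mathcal E^{3/2}$. The remainders are of two kinds: the dissipative substitutions give $\gamma^{-1}\nu_M$ times quartic terms that carry horizontal derivatives, and the nonlinear substitutions give quartic terms $\gamma^{-1}\int\!\!\int|\partial_2^k\Omega_1^\pm|^2|\partial_2^2\Omega_1^\mp|^2$-type quantities plus dissipated pieces. Choosing the arrangement of derivatives so that every remainder is either dissipative or reducible to $E_1^\pm$ is the delicate bookkeeping step. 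The factor $\nu_M\nu_m^{-1}$ that appears there is what forces the threshold $\gamma\nu_m^{1/2}\nu_M^{-1/2}$.

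\textbf{Step 3 (interaction functional for $E_1^\pm$).} Set $W(x_2,t)=\int_{-\infty}^{x_2}\int_{\mathbb R}|\partial_2^k\Omega_1^+|^2\,dy_1\,dy_2$ and $Q(t)=\int W|\partial_2^2\Omega_1^-|^2\,dx$. Then $Q\le\|\Omega^+\|_{H^1}^2\|\Omega^-\|_{H^2}^2$. When computing $Q'$, the transport parts give $\partial_tW\approx\gamma\int|\partial_2^k\Omega_1^+|^2(\cdot,x_2)\,dy_1$ from $+\gamma\partial_2$, and $-\gamma\partial_2$ acting on $\Omega^-$ gives, after integration by parts, another copy with the same sign. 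Hence $Q'+2\gamma\int\!\!\int_{y_1,x_1}|\partial_2^k\Omega_1^+(y_1,x_2)|^2|\partial_2^2\Omega_1^-(x_1,x_2)|^2\le$ dissipative and nonlinear error terms. The resulting interaction density is nonlocal in $x_1$. I would convert it to the local density in \eqref{def-E1} via $\|f\|_{L^\infty_{x_1}}^2\le\|f\|_{L^2_{x_1}}\|\partial_1f\|_{L^2_{x_1}}$, or simply redefine $E_1^\pm$ with the $x_1$-sup, whichever closes. The errors are bounded by $\nu_M$ times dissipation times energy plus $\mathcal E^{1/2}$ times $E_1$. Multiplying by $\gamma^{-1}$ and inserting the result into the $\nu_m^{-1}$-weighted $\mathcal E$ yields $\mathcal E(t)\le C\mathcal E(0)+C(\nu_m^{-1}+\nu_M^{1/2}\nu_m^{-1/2}\gamma^{-1})\mathcal E^{3/2}$. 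Under the smallness hypothesis this closes the bootstrap, and the bound in Theorem~\ref{thm} follows.
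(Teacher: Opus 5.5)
Your overall architecture matches the paper's: normalize $\gamma=1$, work in Els\"asser variables, isolate $N_{51}$ and recover it with coefficient $2$ via the $\Omega^-$/$\Omega^+$ substitutions, control $E_1^\pm$ by an interaction functional, and bootstrap. Steps 1--2 are essentially the paper's argument with the bookkeeping left implicit.

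\textbf{The gap is in Step 3.} You average the weight in $x_1$,
\[
W(x_2,t)=\int_{-\infty}^{x_2}\int_{\mathbb R}|\partial_2^k\Omega_1^+|^2\,dy_1\,dy_2 .
\]
For this weight the only available bound is $\|W\|_{L^\infty}\le\|\partial_2^k\Omega_1^+\|_{L^2}^2$, which carries no horizontal derivative. However, several error terms in $Q'$ contain only one horizontal derivative apart from the weight. The convective part of $\partial_t|\partial_2^2\Omega_1^-|^2$ produces, after using $\partial_2^2\Omega_2^+=-\partial_1\partial_2\Omega_1^+$,
\[
\int_{\mathbb R^2}W\,\partial_2^2\Omega_1^+\,\partial_1\Omega_1^-\,\partial_2^2\Omega_1^-\,dx
\qquad\text{and}\qquad
\int_{\mathbb R^2}W\,\partial_1\partial_2\Omega_1^+\,\partial_2\Omega_1^-\,\partial_2^2\Omega_1^-\,dx .
\]
With a bounded, non-dissipative $W$ these are only of size $\|(\Omega^+|\Omega^-)\|_{H^2}^4\,\|(\partial_1\Omega^+|\partial_1\Omega^-)\|_{H^2}$. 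That quantity lies in $L^2_t$ but not in $L^1_t$.
\begin{itemize}
\item Anisotropic interpolation raises the power of the dissipation at most to $3/2$.
\item Integrating by parts in $x_1$ in the first term produces $\int W\,\partial_1\partial_2^2\Omega_1^+\,\Omega_1^-\,\partial_2^2\Omega_1^-\,dx$. This contains the co-propagating pair $(\Omega_1^-,\partial_2^2\Omega_1^-)$, which no $E_1^\pm$ controls.
\end{itemize}
So your claim that the errors are ``dissipation times energy plus $\mathcal E^{1/2}E_1$'' is not justified. The hedge ``convert or redefine, whichever closes'' does not repair it, because converting the nonlocal density into the local one concerns the density, not the weight.

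\textbf{How the paper avoids this.} It keeps the weight pointwise in $x_1$, as in Ding--Pan--Zhu:
\[
W(x_1,x_2,t)=\int_{-\infty}^{x_2}|\partial_2^k\Omega_1^+(x_1,y_2,t)|^2\,dy_2 .
\]
Then
\[
\|W\|_{L^\infty}\le\|\partial_2^k\Omega_1^+\|_{L^\infty_{x_1}L^2_{x_2}}^2
\lesssim\|\partial_2^k\Omega_1^+\|_{L^2}\,\|\partial_1\partial_2^k\Omega_1^+\|_{L^2},
\]
so the weight is itself dissipative. It supplies exactly the missing factor of $\|\partial_1\Omega^+\|_{H^2}$ in $J_{1221}$, $J_{1222}$ and $J_{123}$. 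Moreover, $Q'$ then generates precisely the local density appearing in the definition of $E_1^\pm$, so no conversion is needed. The price is a few extra terms in which $\partial_1$ falls on $W$, such as $J_{1223}$ and the second part of $J_{124}$. These are harmless because they carry horizontal derivatives.

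\textbf{A smaller point.} Your Step 1 claim that the $\dot H^1$ level closes directly is not correct. That estimate contains $(\partial_1\Omega_1^-,|\partial_2\Omega_1^+|^2)_{L^2}$, which is the Navier--Stokes obstruction itself. The paper never estimates $\dot H^1$; it uses instead the equivalence $\|f\|_{H^2}^2\sim\|f\|_{L^2}^2+\sum_{m}\|\partial_m^2f\|_{L^2}^2$.
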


Theorem \ref{thm} shows that a background magnetic field aligned with the
non-dissipative direction stabilizes the flow on the whole plane, in sharp
contrast with the Navier--Stokes problem \eqref{ns}. The mechanism is the
one described above: the Els\"asser formulation turns the bad interaction
into an interaction between counter-propagating waves, and the nonlinear
dissipative quantities $E_1^\pm$ quantify their separation. Local
well-posedness and uniqueness are discussed in Remark~\ref{rem:local}.

Our second aim is to improve the stability threshold of
Theorem~\ref{thm}. For $\gamma=1$ and $\nu=\mu\ll1$, Theorem~\ref{thm}
allows perturbations of size $O(\nu)$. This scale is dictated by the cubic
terms with two horizontal derivatives, such as
\[
N_{11}=\big(\partial_1^2\Omega^-_2\,\partial_2\Omega^+,\;
\partial_1^2\Omega^+\big)_{L^2}.
\]
Since the factor $\partial_2\Omega^+$ carries no horizontal derivative,
estimating $N_{11}$ factor by factor forces both remaining factors to be
paid for by the dissipation, and one obtains, schematically,
\[
\int_0^t|N_{11}(\tau)|\,d\tau
\lesssim
\int_0^t
\|(\Omega^+|\Omega^-)\|_{H^2}
\|(\partial_1\Omega^+|\partial_1\Omega^-)\|_{H^2}^2\,d\tau
\lesssim
\nu_m^{-1}E^{\frac32}(t),
\]
where $E(t)$ is the total energy introduced in Section~\ref{THM}. Since
$E^{\frac12}$ measures the size of the perturbation, this leads to the
scale $E^{\frac12}\lesssim\nu_m$. However, the two factors
$\partial_1^2\Omega^-$ and $\partial_2\Omega^+$ once more belong to
counter-propagating waves, and products of this type can be controlled in
space--time by a bilinear estimate introduced recently by Jiang, Ren and
Zhou \cite{JiangRenZhou2026} for the fully dissipative MHD equations in a
strong background field. After integration in the variable transverse to
the field, here $x_1$, suitable quadratic quantities built from $(u,b)$
satisfy a pair of balance laws in the $(t,x_2)$-plane,
\[
\partial_tf_{11}+\partial_2f_{12}=G_1,\qquad
\partial_tf_{21}-\partial_2f_{22}=G_2,
\]
with the algebraic identity
\[
f_{11}f_{22}+f_{12}f_{21}
=\frac18\Big(\|K\Omega^+\|_{L^2_{x_1}}^2\|L\Omega^-\|_{L^2_{x_1}}^2
+\|K\Omega^-\|_{L^2_{x_1}}^2\|L\Omega^+\|_{L^2_{x_1}}^2\Big)
\]
for suitable differential operators $L$ and $K$, for instance
$(L,K)=(\partial_1^2,\partial_2)$ for $N_{11}$. The vector field
$(f_{11},f_{12})$ is divergence free in the $(t,x_2)$-plane up to $G_1$,
the vector field $(f_{22},f_{21})$ is curl free up to $G_2$, and the
left-hand side is their Euclidean inner product; this is the div--curl
structure familiar from the compensated compactness theory of Murat and
Tartar \cite{Murat1978,Tartar1979}. A quantitative one-dimensional
div--curl lemma, due to Lai, Shao and Zhou \cite{LaiShaoZhou2025} (see
also \cite{WangZhou2023}) and used in \cite{JiangRenZhou2026}, bounds the
space--time integral of $f_{11}f_{22}+f_{12}f_{21}$ by
$\sup_t\|f_{11}\|_{L^1}\|f_{21}\|_{L^1}$ and the source terms; see
Lemma~\ref{div-curl lemma}. Its proof again uses an interaction potential,
so that both of our results rest on the same principle, applied to
pointwise densities for Theorem~\ref{thm} and to quadratic quantities
averaged in $x_1$ for the result below. The new point in the present
setting is that only horizontal dissipation is available: the source
terms $G_1$ and $G_2$ have to be controlled by
$\|\partial_1(u,b)\|_{H^2}$ alone, which is possible because every
operator $L$ we use contains a horizontal derivative. As a consequence,
only one factor of $N_{11}$ has to be paid for by the dissipation, and the
singular factor $\nu_m^{-1}$ in front of the leading cubic term is reduced
to $\nu_M^{\frac14}\nu_m^{-\frac34}$. This yields the following improved
stability threshold when the viscosity and the magnetic diffusivity are
bounded by one and the background magnetic field is of order one or
larger.

\begin{theorem}\label{cor}
	Let $(u_0,b_0)\in H^2(\mathbb R^2)$ be divergence free. Assume that
	$0<\nu,\mu\leq1$ and $\gamma\geq1$. There exist absolute constants
	$c_0, C_0>0$ such that, if
	\begin{align}\label{initial data}
		\|u_0\|_{H^2}+\|b_0\|_{H^2}
		\leq
		c_0\min\left\{
		\nu_m^{\frac23}\gamma^{\frac13},
		\nu_m^{\frac34}\gamma^{\frac12}
		\right\},
	\end{align}
	then the MHD system \eqref{MHD-horizontal dissipation} admits a unique
	global solution $(u,b)\in C([0,\infty);H^2(\mathbb R^2))$ with
	$\partial_1u,\partial_1b\in L^2(0,\infty;H^2(\mathbb R^2))$, satisfying
	\[
	\begin{aligned}
	&\sup_{t\geq0}
	\left(
	\|u(t)\|_{H^2}+\|b(t)\|_{H^2}
	\right)+
	\left(	\int_0^\infty
	\big(\nu \|\partial_1 u (t )\|_{H^2}^2
	+ \mu \|\partial_1b (t)\|_{H^2}^2\big)
	\,dt\right)^{\frac12}\\
	&\qquad\leq
	C_0
	\left(
	\|u_0\|_{H^2}+\|b_0\|_{H^2}
	\right).
	\end{aligned}
	\]
\end{theorem}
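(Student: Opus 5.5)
The plan is a bootstrap on an augmented energy, reusing the framework behind Theorem~\ref{thm} and upgrading only how the cubic terms with two horizontal derivatives are estimated. Set
\[
E(t)=\sup_{0\le\tau\le t}\|(\Omega^+,\Omega^-)(\tau)\|_{H^2}^2+\nu_m\int_0^t\|(\partial_1\Omega^+,\partial_1\Omega^-)\|_{H^2}^2\,d\tau ,
\]
add the interaction quantities $E_1^\pm(t)$ with suitable weights, and add a third family $E_2(t)$ of $x_1$-averaged bilinear space--time quantities
\[
\int_0^t\int_{\mathbb R}\|K\Omega^\pm\|_{L^2_{x_1}}^2\|L\Omega^\mp\|_{L^2_{x_1}}^2\,dx_2\,d\tau ,
\]
for the pairs $(L,K)$ needed by the critical terms, e.g. $(\partial_1^2,\partial_2)$ for $N_{11}$. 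Local well-posedness and continuity, as in Remark~\ref{rem:local}, reduce the problem to showing an inequality of the form
\[
\mathcal E\le C\mathcal E(0)+C\,\Phi(\nu_m,\nu_M,\gamma)\,\mathcal E^{3/2}+\dots
\]
for the total quantity $\mathcal E$. The prefactors $\Phi$ must be small under \eqref{initial data}.

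\textbf{Step 1: the $H^2$ energy identity.} Using $\partial_2\Omega_2^\pm=-\partial_1\Omega_1^\pm$ and the Els\"asser structure, I would split the cubic terms into three groups, as in the proof of Theorem~\ref{thm}.
\begin{itemize}
\item[(i)] Terms with three horizontal derivatives in total. These are bounded by $\nu_m^{-1}E^{3/2}$ times a harmless factor, or better by using the $L^\infty$ control of $\partial_1\Omega$ through anisotropic interpolation.
\item[(ii)] Terms like $N_{51}$. These are treated exactly as before, via the time integration by parts with the two wave equations, which costs $\gamma^{-1}$, together with the weighted functionals $E_1^\pm$ through $Q(t)$.
\item[(iii)] Terms like $N_{11}$. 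For these I would not pay both non-$\partial_1$ factors with dissipation. Instead I would use Cauchy--Schwarz in the form
\[
|N_{11}|\le\Big(\int\|\partial_2\Omega^+\|_{L^\infty_{x_1}}^2\|\partial_1^2\Omega^-\|_{L^2_{x_1}}^2\,dx_2\Big)^{1/2}\|\partial_1^2\Omega^+\|_{L^2}.
\]
Then I would pass from $L^\infty_{x_1}$ to $L^2_{x_1}$ norms by the 1D Agmon inequality $\|f\|_{L^\infty_{x_1}}^2\le\|f\|_{L^2_{x_1}}\|\partial_1f\|_{L^2_{x_1}}$. This gives time integrals bounded by $E_2^{1/2}$ times the dissipation, so only one factor is paid by $\nu_m^{-1/2}$.
\end{itemize}
Balancing the geometric means is what should produce the factor $\nu_M^{1/4}\nu_m^{-3/4}$ mentioned in the introduction.

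\textbf{Step 2: estimate $E_2$ by Lemma~\ref{div-curl lemma}.} For each pair $(L,K)$ I would build $f_{11},f_{12},f_{21},f_{22}$ from $x_1$-integrals of quadratic expressions in $L\Omega^\pm$ and $K\Omega^\pm$. The transport parts $\partial_t\mp\gamma\partial_2$ then give the balance laws $\partial_tf_{11}+\partial_2f_{12}=G_1$ and $\partial_tf_{21}-\partial_2f_{22}=G_2$, with flux speeds $\pm\gamma$, and the stated algebraic identity holds. The lemma bounds $\gamma\,E_2$ by
\[
\sup_t\|f_{11}\|_{L^1}\|f_{21}\|_{L^1}+\text{source terms}.
\]
The first term is $\lesssim E^2$, which yields $E_2\lesssim\gamma^{-1}(E^2+\dots)$. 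The source terms $G_i$ come from dissipation (the $\nu_\pm\partial_1^2$ terms) and from nonlinear terms. Since every $L$ carries a $\partial_1$, each $G_i$ integrated in space--time should be bounded by $\nu_M\nu_m^{-1}E^2$ plus cubic and quartic terms in $E$ and $\|\partial_1\Omega\|_{H^2}$. The constraints $\nu,\mu\le1\le\gamma$ let me discard lower-order combinations.

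\textbf{Main obstacle and closing.} The hard part is controlling the sources $G_i$ with only horizontal dissipation. In particular, after integration by parts, terms with $\nu_\pm\partial_1^2$ acting on $L\Omega$ can have more derivatives than $H^2$ allows, e.g. $\partial_1^4\Omega$. I would choose $f$ symmetrically so that one $\partial_1$ can be moved onto the other factor, always landing on $\|\partial_1\Omega\|_{H^2}$. If that fails for the pair $L=\partial_1^2$, I would replace $L$ by $\partial_1\partial_2$ via the divergence-free identity.

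Collecting the three steps should give
\[
\mathcal E\lesssim\mathcal E(0)+\big(\nu_m^{-3/2}\gamma^{-1/2}\mathcal E^{1/2}+\nu_m^{-4/3}\gamma^{-2/3}\mathcal E\big)\mathcal E+\dots ,
\]
with exponents tuned to match the two thresholds $\nu_m^{2/3}\gamma^{1/3}$ and $\nu_m^{3/4}\gamma^{1/2}$. The bootstrap then closes under \eqref{initial data} with $c_0$ small, and uniqueness follows from an $L^2$ difference estimate.
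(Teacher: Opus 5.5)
Your architecture is the paper's. The $N_{51}$ argument and the functionals $E_1^\pm$ are reused, and products of counter-propagating factors are controlled through $x_1$-averaged balance laws and Lemma~\ref{div-curl lemma}, with every $L$ containing $\partial_1$. Putting $L^\infty_{x_1}$ on $\partial_2\Omega^+$ by Agmon, instead of on the dissipative factor $\partial_1^2\Omega^+$ as the paper does, is a workable variant, since $K=\partial_1\partial_2$ still contains $\partial_1$; it only adds the pair $(\partial_1^2,\partial_1\partial_2)$.

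The gap is Step~1(i). There you bound by $\nu_m^{-1}E^{3/2}$ the terms in which every factor carries a $\partial_1$: $N_{12},N_{13},N_{14}$, and, after $\partial_2\Omega_2^\pm=-\partial_1\Omega_1^\pm$, $N_{17},N_{18}$, together with their analogues in $N_3+N_4$. This is exactly the bottleneck of Theorem~\ref{thm}, and no harmless factor or anisotropic interpolation can improve it. The only time-integrable quantity is $\int_0^t\|(\partial_1\Omega^+|\partial_1\Omega^-)\|_{H^2}^2\,d\tau\lesssim\nu_m^{-1}E$, so any factor-by-factor bound of a cubic term must spend exactly two dissipative factors. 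A term $C\nu_m^{-1}E^{3/2}$ forces $\|u_0\|_{H^2}+\|b_0\|_{H^2}\lesssim\nu_m$, independently of $\gamma$. This is smaller than the threshold of Theorem~\ref{cor} by the unbounded factor $\min\{\nu_m^{-1/3}\gamma^{1/3},\nu_m^{-1/4}\gamma^{1/2}\}$, so you would only recover the scale of Theorem~\ref{thm}. In the paper these terms also go through the div--curl bound, so that only one factor is paid for by the dissipation. The pairs are $(L,K)=(\partial_1^2,\partial_1)$ for $N_{13},N_{14}$ and $(\partial_1\partial_2,\partial_1)$ for $N_{12},N_{17},N_{18}$. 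Likewise, $N_{15}$ contains $\partial_2^2\Omega_1^-$ and fits no admissible pair: a second-order $K$ without $\partial_1$ would put third derivatives such as $\partial_2^2\nabla p$ into $G_2$. It needs an integration by parts in $x_2$ together with $\partial_2^3\Omega_2^+=-\partial_1\partial_2^2\Omega_1^+$, which your plan does not supply.

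There is a second, independent gap in the parameter bookkeeping. Your energy has dissipative part $\nu_m\int_0^t\|(\partial_1\Omega^+|\partial_1\Omega^-)\|_{H^2}^2\,d\tau$, and your sources are bounded by $\nu_M\nu_m^{-1}E^2$. With these, the div--curl lemma gives $E_2\lesssim\gamma^{-1}\nu_M\nu_m^{-1}E^2+\dots$, hence $\int_0^t|N_{11}|\,d\tau\lesssim\gamma^{-1/2}\nu_M^{1/2}\nu_m^{-1}E^{3/2}$. That is not the factor $\nu_M^{1/4}\nu_m^{-3/4}$ you announce, and for $\nu_M=\gamma=1$ it again gives the scale $\nu_m$ instead of $\nu_m^{3/4}$. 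The same loss appears in the $E_1^\pm$ estimate ($J_4$, $J_{124}$) and in $N_{511}^{22},N_{511}^{23},N_{513}$.

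The missing point is to keep the exact weights $\int_0^t(\nu\|\partial_1u\|_{H^2}^2+\mu\|\partial_1b\|_{H^2}^2)\,d\tau$ in the energy. After integration by parts in $x_1$, the diffusive sources are then $-\int(\nu|\partial_1Lu|^2+\mu|\partial_1Lb|^2)\,dx_1$, which costs only $E^2$, and $-(\nu+\mu)\int\partial_1Ku\cdot\partial_1Kb\,dx_1$, which costs $(\nu+\mu)(\nu\mu)^{-1/2}E^2\lesssim(\nu_M/\nu_m)^{1/2}E^2$. Only then does $N_{11}$ cost $\gamma^{-1/2}\nu_M^{1/4}\nu_m^{-3/4}E^{3/2}$.

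Finally, your closing inequality is asserted rather than derived. Its first term would require $\|u_0\|_{H^2}+\|b_0\|_{H^2}\ll\nu_m^{3/2}\gamma^{1/2}$, and it does not locate the source of the threshold $\nu_m^{2/3}\gamma^{1/3}$. In the paper that threshold comes from the nonlinear sources, $I_{112}+I_{114}\lesssim\nu_m^{-1}E^{5/2}$, which produce the term $\nu_m^{-1}E^{7/4}$ in \eqref{E_0(t)-new}.
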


The sharper parameter-dependent criterion established in the proof of
Theorem \ref{cor} yields the following consequence: under a suitable
comparability condition on the viscosity and magnetic diffusivity, the
stability threshold takes a particularly simple form.

\begin{cor}\label{cor1}
	Let $(u_0,b_0)\in H^2(\mathbb R^2)$ be divergence free. Assume that
	$0<\nu,\mu\leq1$, $\gamma\geq1$, and
	\[
	\nu_M\leq\beta\nu_m^{\frac13}
	\]
	for some $\beta>0$. There exist a constant $c_\beta>0$, depending only
	on $\beta$, and an absolute constant $C_0>0$ such that, if
	\[
	\|u_0\|_{H^2}+\|b_0\|_{H^2}
	\leq
	c_\beta\,\nu_m^{\frac23}\gamma^{\frac13},
	\]
	then the conclusions of Theorem \ref{cor} hold. In particular, this is
	the case with $\beta=1$ when $\nu=\mu$.
\end{cor}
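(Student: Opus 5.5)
Corollary~\ref{cor1} will follow from the sharper, parameter-dependent smallness criterion obtained in the proof of Theorem~\ref{cor}, not from the simplified form \eqref{initial data}. That proof closes a bootstrap for the total energy $E(t)$, which combines the $H^2$ energy, the horizontal dissipation, the weighted interaction quantities $E_1^\pm$ and the div--curl quantities. The closing inequality has the schematic form
\[
E(t)\le C E(0)+C\sum_j a_j(\nu,\mu,\gamma)\,E^{\frac32}(t)+\dots,
\]
where the coefficients $a_j$ are explicit products of powers of $\nu_m,\nu_M,\gamma$. Two examples are the factor $\nu_M^{\frac14}\nu_m^{-\frac34}$ in front of the leading cubic terms such as $N_{11}$, and the factor coming from $N_{51}$ through $E_1^\pm$, which is of type $\nu_M^{\frac12}\nu_m^{-\frac12}\gamma^{-1}$. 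Closing the bootstrap requires $E(0)^{\frac12}\le c\min_j a_j^{-1}$.

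First I will record this criterion as it appears in the proof of Theorem~\ref{cor}. It should be a minimum over finitely many monomials $\nu_m^{\alpha_j}\nu_M^{-\beta_j}\gamma^{\delta_j}$, with the standing assumptions $\nu,\mu\le1\le\gamma$ already used to discard harmless terms. Theorem~\ref{cor} was derived from it by bounding $\nu_M\le1$, which yields the two terms in \eqref{initial data}.

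Next I will substitute $\nu_M\le\beta\nu_m^{1/3}$ instead of $\nu_M\le 1$. The key check is the term coming from the div--curl improvement of $N_{11}$. Balancing $\nu_M^{\frac14}\nu_m^{-\frac34}E^{\frac12}$ against the available $\gamma$-gain (a factor like $\gamma^{-\frac12}$ from the div--curl lemma, Lemma~\ref{div-curl lemma}, which trades time integrability for $\gamma^{-1}$ in $\sup_t\|f_{11}\|_{L^1}\|f_{21}\|_{L^1}$) gives a threshold of the form $\nu_m^{a}\nu_M^{-b}\gamma^{\frac13}$. Inserting $\nu_M\lesssim\beta\nu_m^{1/3}$ should turn this into $\beta^{-b}\nu_m^{\frac23}\gamma^{\frac13}$.

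I then need to verify that every other term of the minimum is at least $c_\beta\,\nu_m^{\frac23}\gamma^{\frac13}$ under the hypotheses. In particular, for the second term in \eqref{initial data} we have $\nu_m^{\frac34}\gamma^{\frac12}\ge\nu_m^{\frac23}\gamma^{\frac13}$ if and only if $\nu_m^{\frac1{12}}\gamma^{\frac16}\ge1$. This fails for small $\nu_m$, so the refined criterion must carry an explicit $\nu_M$ in that term, presumably $\nu_m^{\frac12}\nu_M^{\frac14}\gamma^{\frac12}$ or similar. With $\nu_M\ge\nu_m$ this is at least $\nu_m^{\frac34}\gamma^{\frac12}$, but I will need the sharper form to reach the exponent $\frac23$. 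The case $\nu=\mu$ then gives $\nu_M=\nu_m\le\nu_m^{1/3}$ because $\nu_m\le 1$, so $\beta=1$ works.

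\textbf{Main obstacle.} I expect the main obstacle to be bookkeeping rather than a new estimate. One has to extract the exact exponents of $\nu_M$ from each coefficient in the proof of Theorem~\ref{cor}, and confirm that no term (in particular the one producing $\nu_m^{3/4}\gamma^{1/2}$) is genuinely worse than $\nu_m^{2/3}\gamma^{1/3}$ once $\nu_M$ is tied to $\nu_m^{1/3}$. If some term resists, I would first try to re-split its Hölder/interpolation step, moving a factor of $\nu_M$ to the dissipation side.
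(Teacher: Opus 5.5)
Your strategy is the paper's: Corollary~\ref{cor1} is read off from the parameter-explicit criterion \eqref{general-gamma-sharp} by inserting $\nu_M\le\beta\nu_m^{1/3}$. That criterion is established in the proof of Theorem~\ref{cor} for all $\nu,\mu,\gamma>0$, with no standing assumptions. However, this insertion is the entire proof. You never write the criterion down, and the forms you guess for it are wrong in ways that would make your check fail. As written, the proposal therefore has a gap.

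The criterion is
\[
\|u_0\|_{H^2}+\|b_0\|_{H^2}\le c_0\min\Big\{\nu_m^{\frac23}\gamma^{\frac13},\ \frac{\nu_m^{\frac12}}{\nu_M^{\frac12}}\gamma,\ \frac{\nu_m^{\frac34}}{\nu_M^{\frac14}}\gamma^{\frac12}\Big\}.
\]
It comes from the normalized closing inequality by rescaling $\widetilde\nu=\nu/\gamma$ and dividing the data by $\gamma$. That inequality has coefficient $\nu_M^{1/4}\nu_m^{-3/4}+\nu_M^{1/2}\nu_m^{-1/2}$ on $E^{3/2}$, $\nu_M^{1/2}\nu_m^{-3/2}$ on $E^2$, $\nu_m^{-1}$ on $E^{7/4}$, and $\nu_m^{-2}$ on $E^{5/2}$.

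Your guesses go wrong in two places.

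\textbf{(i) The $\gamma^{1/2}$ term.} It carries $\nu_M^{-1/4}$, a \emph{negative} power. That is the only reason an upper bound on $\nu_M$ can help; Theorem~\ref{cor} was obtained from it via $\nu_M\le1$, as you first said. Your guess $\nu_m^{1/2}\nu_M^{1/4}\gamma^{1/2}$, combined with $\nu_M\ge\nu_m$, only returns $\nu_m^{3/4}\gamma^{1/2}$. You yourself observed that this is not $\gtrsim\nu_m^{2/3}\gamma^{1/3}$, and the hypothesis cannot repair it.

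\textbf{(ii) The $\nu_m^{2/3}\gamma^{1/3}$ term.} It does not come from balancing the leading coefficient $\nu_M^{1/4}\nu_m^{-3/4}$ against a $\gamma$-gain. All $\gamma$-dependence enters through the rescaling, and after rescaling that balance yields the $\gamma^{1/2}$ term. The $\nu_m^{2/3}\gamma^{1/3}$ term comes instead from the higher-order terms $\nu_m^{-1}E^{7/4}$ and $\nu_m^{-2}E^{5/2}$, which force $E\lesssim\widetilde\nu_m^{4/3}$ in normalized variables. It contains no $\nu_M$ at all, so it needs no comparability hypothesis. Your closing condition $E(0)^{1/2}\le c\min_j a_j^{-1}$ omits exactly these terms.

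With the correct criterion, the verification takes two lines. Using $\nu_m\le1\le\gamma$ and $\nu_M\le\beta\nu_m^{1/3}$,
\[
\frac{\nu_m^{\frac12}}{\nu_M^{\frac12}}\gamma\ge\beta^{-\frac12}\nu_m^{\frac13}\gamma\ge\beta^{-\frac12}\nu_m^{\frac23}\gamma^{\frac13},
\qquad
\frac{\nu_m^{\frac34}}{\nu_M^{\frac14}}\gamma^{\frac12}\ge\beta^{-\frac14}\nu_m^{\frac23}\gamma^{\frac12}\ge\beta^{-\frac14}\nu_m^{\frac23}\gamma^{\frac13}.
\]
Hence $c_\beta=c_0\min\{1,\beta^{-1/2},\beta^{-1/4}\}$ works. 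Note that the middle term $\nu_m^{1/2}\nu_M^{-1/2}\gamma$, which you did not check, also needs the hypothesis. No re-splitting of H\"older or interpolation steps is required, and your treatment of the case $\nu=\mu$ with $\beta=1$ is correct.
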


\begin{rem}\label{rem:exponent}
	Theorem \ref{cor} and Corollary \ref{cor1} show that a sharper
	exploitation of the propagation structure induced by the background
	magnetic field lowers the viscosity exponent in the stability threshold.
	For $\gamma=1$, the basic estimate in Theorem \ref{thm} gives an
	admissible perturbation size of order $O(\nu_m)$, whereas
	Theorem \ref{cor} improves this to $O(\nu_m^{\frac34})$.
	Under the additional comparability condition
	$\nu_M\lesssim\nu_m^{\frac13}$, Corollary \ref{cor1} further improves
	the scale to $O(\nu_m^{\frac23})$. Since smaller viscosity exponents
	correspond to larger admissible perturbations when $\nu_m\ll1$, these
	results demonstrate a substantial improvement of the stability threshold.
\end{rem}

\begin{rem}\label{rem:gamma}
	Theorem \ref{cor} and Corollary \ref{cor1} also quantify the stabilizing
	effect of the strength of the background magnetic field. For fixed
	viscosity and magnetic diffusivity, the thresholds in
	Theorem \ref{cor} and Corollary \ref{cor1} contain positive powers of
	$\gamma$ and behave like $\nu_m^{\frac23}\gamma^{\frac13}$ as
	$\gamma\to\infty$. Thus a sufficiently strong background magnetic field
	allows the system to remain globally stable for perturbations that are
	large in $H^2$. These perturbations nevertheless remain small relative
	to the background field, since
	$\nu_m^{\frac23}\gamma^{\frac13}/\gamma\to0$ as $\gamma\to\infty$. By
	contrast, the threshold of Theorem \ref{thm} saturates at $\nu_m$ as
	$\gamma\to\infty$. This is in line with Kraichnan's prediction
	\cite{Kraichnan1965} that a strong background field weakens the
	nonlinear interactions. For the fully dissipative MHD system, Jiang,
	Ren and Zhou \cite{JiangRenZhou2026} obtained global solutions for
	perturbations of size up to order $\gamma^{\frac13}$ for fixed
	dissipation coefficients (see \cite[Remark~2.1]{JiangRenZhou2026});
	Theorem \ref{cor} exhibits the same power of $\gamma$ with dissipation
	in only one direction, and with an explicit dependence on $\nu$ and
	$\mu$.
\end{rem}

\begin{rem}\label{rem:general}
	The restrictions $0<\nu,\mu\leq1$ and $\gamma\geq1$ in
	Theorem \ref{cor} are used only to simplify the form of the threshold.
	The proof in Section~\ref{COR} shows that, for all $\nu,\mu,\gamma>0$,
	the conclusions of Theorem \ref{cor} hold under the condition
	\[
	\|u_0\|_{H^2}+\|b_0\|_{H^2}
	\leq
	c_0\min\left\{
	\nu_m^{\frac23}\gamma^{\frac13},\;
	\frac{\nu_m^{\frac34}}{\nu_M^{\frac14}}\gamma^{\frac12},\;
	\frac{\nu_m^{\frac12}}{\nu_M^{\frac12}}\gamma
	\right\};
	\]
	see \eqref{general-gamma-sharp}. This threshold is never smaller than
	the one in Theorem \ref{thm}: writing
	$a=\nu_m$ and $d=\gamma\nu_m^{\frac12}\nu_M^{-\frac12}$, we have
	\[
	\min\{a,d\}\leq a^{\frac23}d^{\frac13}\leq\nu_m^{\frac23}\gamma^{\frac13},
	\qquad
	\min\{a,d\}\leq a^{\frac12}d^{\frac12}
	=\gamma^{\frac12}\nu_m^{\frac34}\nu_M^{-\frac14},
	\qquad
	\min\{a,d\}\leq d .
	\]
	In particular, if $\nu=\mu\leq\gamma$, the threshold reduces to
	$\nu^{\frac23}\gamma^{\frac13}$.
\end{rem}

The stabilizing effect of a background magnetic field on electrically
conducting fluids has been studied extensively. In two dimensions, global
small solutions near a background field for the system without magnetic
diffusion were constructed by Lin, Xu and Zhang \cite{LinXuZhang2015} and
by Ren, Wu, Xiang and Zhang \cite{RenWuXiangZhang2014} (see
\cite{RenXiangZhang2016} for a strip); in \cite{RenWuXiangZhang2014} the
data are also assumed to be small in anisotropic Sobolev spaces with
negative indices, and Ding, Pan and Zhu \cite{DingPanZhu2025} recently
showed that smallness in $H^2$ alone suffices. Zhang \cite{Zhang2016} treated
large background fields, and Ji and Wu \cite{JiWu2020} the resistive
system without viscosity. Further anisotropic or partially damped
configurations were considered in
\cite{BoardmanLinWu2020,LinJiWuYan2020,LaiWuZhang2021}, and on
$\mathbb T\times\mathbb R$ Suo and Jiu \cite{SuoJiu2022} proved global
well-posedness near a background field for the non-resistive system with
horizontal viscosity, using symmetry and Poincar\'e-type inequalities. In
three dimensions, we mention the works of Xu and Zhang
\cite{XuZhang2015}, Abidi and Zhang \cite{AbidiZhang2017}, Deng and Zhang
\cite{DengZhang2018}, and Pan, Zhou and Zhu \cite{PanZhouZhu2018} on the
system without magnetic diffusion, and those of Wu and Zhu
\cite{WuZhu2021}, Lin, Wu and Zhu \cite{LinWuZhu2023,LinWuZhu2025}, and
Lai, Wu, Zhang and Zhao \cite{LaiWuZhangZhao2026} on systems with
anisotropic partial dissipation. For ideal or weakly dissipative MHD, the
global stability of Alfv\'en waves in a strong magnetic field goes back to
Bardos, Sulem and Sulem \cite{BardosSulemSulem1988} and has been developed
in \cite{HeXuYu2018,CaiLei2018,WeiZhang2017}. Global solutions with large
perturbations in the strong-field regime, as predicted by Kraichnan
\cite{Kraichnan1965}, were obtained in \cite{BardosSulemSulem1988} for
ideal MHD, in \cite{Zhang2016,JiangJiang2023} for the viscous
non-resistive system, and in \cite{CaiCuiJiangLiu2026,JiangRenZhou2026}
for the fully dissipative system. The quantitative dependence of the
admissible perturbation size on the dissipation coefficients, the
\emph{stability threshold}, is a central theme in hydrodynamic stability
\cite{BedrossianGermainMasmoudi2019}, including for MHD shear flows
\cite{Liss2020,Dolce2024}.

The present setting differs from the works above in three respects. In
contrast with the results on $\mathbb T\times\mathbb R$, the domain is the
whole plane, so no Poincar\'e-type mechanism is available. In contrast
with \cite{CaiCuiJiangLiu2026,JiangRenZhou2026} and the other stability
results above, both dissipations act in the same single direction, so the
system has no direct dissipation in $x_2$, and, as shown above, the
background field does not create any at the linear level. Finally, the
thresholds are explicit in all three parameters $\nu$, $\mu$ and
$\gamma$.

\medskip
\noindent\textbf{Notations.}
Throughout the paper, all spatial functions are defined on $\mathbb R^2$
unless otherwise specified. For $1\leq p\leq\infty$, we write
$L^p=L^p(\mathbb R^2)$, and, for any nonnegative integer $k$, we write
$H^k=H^k(\mathbb R^2)$. The notation $(f,g)_{L^2}$ denotes the
$L^2(\mathbb R^2)$ inner product.

For $1\leq p,q<\infty$, the mixed norm is defined by
\[
\|f\|_{L^p_{x_1}L^q_{x_2}}
\triangleq
\left(
\int_{\mathbb R}
\left(
\int_{\mathbb R}|f(x_1,x_2)|^q\,dx_2
\right)^{\frac{p}{q}}
dx_1
\right)^{\frac1p},
\]
with the usual modifications when $p=\infty$ or $q=\infty$. Thus, the
norm in the rightmost variable is taken first. For a function of
$(x_1,x_2)$, $\|f\|_{L^p_{x_1}}$ denotes the $L^p$ norm in $x_1$ at fixed
$x_2$, which is a function of $x_2$; similarly for $\|f\|_{L^p_{x_2}}$. We
use $L^p_x$ and $H^k_x$ when the spatial variables need to be distinguished
from time. For a Banach space $X$ and $1\leq r\leq\infty$, we set
\[
\|f\|_{L^r_tX}
\triangleq
\|f\|_{L^r(0,t;X)}.
\]

For two functions $f$ and $g$, we use the shorthand
\[
\|(f|g)\|_X
\triangleq
\|f\|_X+\|g\|_X.
\]
Finally, $a\lesssim b$ means that $a\leq Cb$ for an absolute constant
$C>0$, independent of $\nu$, $\mu$, $\gamma$ and $t$. The value of $C$ may
change from line to line.

\vskip .1in
The rest of the paper is organized as follows. Section~\ref{THM}
performs the scaling reduction, introduces the energy functionals, and
establishes the basic energy and interaction estimates, leading to the
proof of Theorem~\ref{thm}. Section~\ref{COR} develops a refined
space--time estimate based on the div--curl structure and proves
Theorem~\ref{cor}, Remark~\ref{rem:general} and Corollary~\ref{cor1}. The
div--curl lemma is proved in Appendix~\ref{app:divcurl}.

\section{Proof of Theorem \ref{thm}}\label{THM}

We begin with a scaling reduction to the normalized background field. For
$\gamma>0$, set
\[
v(x,t)=\frac1\gamma u\left(x,\frac{t}{\gamma}\right),\qquad
h(x,t)=\frac1\gamma b\left(x,\frac{t}{\gamma}\right),\qquad
q(x,t)=\frac1{\gamma^2}p\left(x,\frac{t}{\gamma}\right).
\]
Then $(v,h)$ satisfies
\begin{align}\label{scaling system}
\left\{
\begin{aligned}
&\partial_t v-\frac{\nu}{\gamma}\partial_1^2v
+v\cdot\nabla v-h\cdot\nabla h+\nabla q=\partial_2h,\\
&\partial_t h-\frac{\mu}{\gamma}\partial_1^2h
+v\cdot\nabla h-h\cdot\nabla v=\partial_2v,\\
&\nabla\cdot v=\nabla\cdot h=0,\\
&v(x,0)=\frac1\gamma u_0(x),\qquad
h(x,0)=\frac1\gamma b_0(x).
\end{aligned}
\right.
\end{align}
Thus the background field is normalized to one, while the dissipative
coefficients become
\[
\widetilde\nu=\frac{\nu}{\gamma},\qquad
\widetilde\mu=\frac{\mu}{\gamma},\qquad
\widetilde\nu_m=\frac{\nu_m}{\gamma},\qquad
\widetilde\nu_M=\frac{\nu_M}{\gamma}.
\]
It is therefore enough to establish the a priori estimates for the
normalized system. In the energy estimates below, we set $\gamma=1$ and,
for notational simplicity, suppress the tildes on the dissipative
parameters. In particular, the Els\"asser variables $\Omega^\pm=u\pm b$
satisfy \eqref{Elsasser-system} with $\gamma=1$.

\begin{rem}[Local well-posedness and uniqueness]\label{rem:local}
	Let $\nu,\mu,\gamma>0$ and let $(u_0,b_0)\in H^2$ be divergence free.
	Then \eqref{MHD-horizontal dissipation} has a unique solution
	$(u,b)\in C([0,T];H^2)$ with
	$\partial_1u,\partial_1b\in L^2(0,T;H^2)$ on an interval $[0,T]$ whose
	length depends only on $\nu_m$ and $\|(u_0|b_0)\|_{H^2}$, and the
	solution can be continued as long as $\|(u,b)(t)\|_{H^2}$ remains
	bounded. Local existence follows from standard procedures, see, e.g.,
	\cite{MajdaBertozzi2002}: one adds $\varepsilon\Delta u$ and
	$\varepsilon\Delta b$ to the equations, derives $H^2$ bounds that are
	uniform in $\varepsilon$ on a short time interval, and passes to the
	limit. Since $H^2$ is a borderline space for the two-dimensional Euler
	equations, the horizontal dissipation is used here: the trilinear
	terms are bounded by anisotropic interpolation as in
	Section~\ref{Estimate of energy E_0(t)}, and the resulting powers of
	$\|(\partial_1\Omega^+|\partial_1\Omega^-)\|_{H^2}$ are absorbed by the
	dissipation through Young's inequality. The linear terms
	$\gamma\partial_2b$ and $\gamma\partial_2u$ cancel in these estimates,
	so $\gamma$ does not enter. Uniqueness at the $H^2$ level is
	elementary: if $\delta$ denotes the difference of two such solutions,
	the worst terms in the $L^2$ estimate for $\delta$ are bounded by
	\[
	\int_{\mathbb R^2}|\nabla(u,b)|\,|\delta|^2\,dx
	\leq\|\nabla(u,b)\|_{L^2_{x_1}L^\infty_{x_2}}
	\|\delta\|_{L^\infty_{x_1}L^2_{x_2}}\|\delta\|_{L^2}
	\lesssim\|(u|b)\|_{H^2}\|\delta\|_{L^2}^{\frac32}
	\|\partial_1\delta\|_{L^2}^{\frac12},
	\]
	and Young's and Gronwall's inequalities give $\delta\equiv0$. For the
	two-dimensional Navier--Stokes equations with dissipation in only one
	direction, uniqueness even holds in the class of $H^1$ solutions, a
	level that is heuristically optimal \cite{ZhouWu2026}. In view of these
	facts, it suffices to prove the global a priori estimates below for
	smooth solutions that decay at infinity (for instance with data in
	$H^4$); the constants do not depend on the additional regularity, and
	the general case follows by density. This also justifies the time
	differentiations and the integrations over half-lines in $x_2$
	performed below.
\end{rem}

We introduce the energy
\begin{align*}
	E_0(t)
	\triangleq{}&
	\sup_{0\leq\tau\leq t}
	\left(
	\|\Omega^+(\tau)\|_{H^2}^2
	+\|\Omega^-(\tau)\|_{H^2}^2
	\right)
	+
	\int_0^t
	\left(
	\nu\|\partial_1u(\tau)\|_{H^2}^2
	+\mu\|\partial_1b(\tau)\|_{H^2}^2
	\right)\,d\tau,
\end{align*}
where the dissipative part is written in terms of $(u,b)$ to retain the
exact weights $\nu$ and $\mu$, and, with the interaction functionals
$E_1^\pm$ defined in \eqref{def-E1}, the total energy
\[
E(t)
\triangleq
E_0(t)
+\nu_m^{-1}E_1^+(t)
+\nu_m^{-1}E_1^-(t).
\]
Note that $E(0)=E_0(0)=2\big(\|u_0\|_{H^2}^2+\|b_0\|_{H^2}^2\big)$. The
proof of Theorem \ref{thm} is reduced to the following two estimates:
\begin{align}
	E_0(t)
	\leq{}&
	C E(0)
	+C\nu_m^{-1}\big(E_1^+(t)+E_1^-(t)\big)
	+C\frac{1+\nu_M^{\frac12}\nu_m^{\frac12}}{\nu_m}
	E^{\frac32}(t)
	+C\nu_m^{-1}E^2(t),
	\label{E_0(t)}
	\\
	E_1^\pm(t)
	\leq{}&
	C\frac{\nu_M^{\frac12}}{\nu_m^{\frac12}}E^2(t)
	+C\nu_m^{-1}E^{\frac52}(t)
	+C\nu_m E^{\frac32}(t).
	\label{E1^{pm}(t)}
\end{align}
Here and below, $C>0$ denotes a generic absolute constant.

The rest of this section is organized as follows. In
Section~\ref{Estimate of energy E_0(t)} we derive the $H^2$ energy
identity for $\Omega^\pm$ and reduce \eqref{E_0(t)} to six cubic terms
$N_1,\dots,N_6$. The terms $N_1$--$N_4$ contain enough horizontal
derivatives and are estimated directly. In $N_5$ and $N_6$, all
contributions but one are bounded by the energy, the dissipation and the
interaction functionals $E_1^\pm$; the remaining critical term $N_{51}$,
the analogue of \eqref{bad-term-NS}, is isolated in
Proposition~\ref{prop:N51}. Section~\ref{subsec:N51} proves this
proposition by exploiting the counter-propagation of $\Omega^\pm$: we
replace $\partial_2\Omega^-$ by means of the equation for $\Omega^-$,
integrate by parts in time, and use the equation for $\Omega^+$, after
which the principal term reappears with the coefficient $-2$ and can be
solved for. Section~\ref{Estimate of energy E_1(t)} proves
\eqref{E1^{pm}(t)}: the interaction densities are generated by the time
derivative of a weighted functional, whose weight is obtained by
integrating $|\partial_2^k\Omega^+_1|^2$ in $x_2$, following the idea of
\cite{DingPanZhu2025}. Finally,
Section~\ref{subsec:close2} combines \eqref{E_0(t)} and
\eqref{E1^{pm}(t)}, closes the estimate by a continuity argument, and
returns to general $\gamma$ by scaling.

\subsection{Estimate for $E_0(t)$}\label{Estimate of energy E_0(t)}

By the norm equivalence
\[
\Vert(\Omega^+|\Omega^-)\Vert_{H^2}^2
\sim \Vert(\Omega^+|\Omega^-)\Vert_{L^2}^2
+\sum_{m=1}^{2}\Vert(\partial_m^2\Omega^+|\partial_m^2\Omega^-)\Vert_{L^2}^2,
\]
it suffices to estimate the $L^2$ norms and the pure second derivatives.

Taking the $L^2$ inner products of the equations in \eqref{Elsasser-system} with $\Omega^+$ and $\Omega^-$, respectively, and using $\nabla\cdot\Omega^\pm=0$, we obtain
\[
\begin{aligned}
	&\Vert\Omega^+(t)\Vert_{L^2}^2+\Vert\Omega^-(t)\Vert_{L^2}^2
	+2\int_0^t\Big[\nu_+\big(\Vert\partial_1\Omega^+\Vert_{L^2}^2+\Vert\partial_1\Omega^-\Vert_{L^2}^2\big)
	+2\nu_-(\partial_1\Omega^-,\partial_1\Omega^+)_{L^2}\Big]\,d\tau\\
	&=\Vert\Omega^+_0\Vert_{L^2}^2+\Vert\Omega^-_0\Vert_{L^2}^2.
\end{aligned}
\]
By the definitions of $\nu_+$ and $\nu_-$,
\[\begin{aligned}
\nu_+\big(\|\partial_1\Omega^+\|_{L^2}^2+\|\partial_1\Omega^-\|_{L^2}^2\big)
+2\nu_-(\partial_1\Omega^-,\partial_1\Omega^+)_{L^2}
&=2\nu\|\partial_1u\|_{L^2}^2+2\mu\|\partial_1b\|_{L^2}^2
\\
&\geq\nu_m\big(\|\partial_1\Omega^+\|_{L^2}^2+\|\partial_1\Omega^-\|_{L^2}^2\big).
\end{aligned}\]
Therefore,
\begin{align}\label{L^2 energy estimate}
	\Vert\Omega^+(t)\Vert_{L^2}^2+\Vert\Omega^-(t)\Vert_{L^2}^2
	+4\int_0^t\big(\nu\Vert\partial_1u(\tau)\Vert_{L^2}^2+\mu\Vert\partial_1b(\tau)\Vert_{L^2}^2\big)\,d\tau
	= \Vert\Omega^+_0\Vert_{L^2}^2+\Vert\Omega^-_0\Vert_{L^2}^2 ,
\end{align}
and, in particular,
\[
2\nu_m\int_0^t\big(\Vert\partial_1\Omega^+(\tau)\Vert_{L^2}^2+\Vert\partial_1\Omega^-(\tau)\Vert_{L^2}^2\big)\,d\tau
\leq \Vert\Omega^+_0\Vert_{L^2}^2+\Vert\Omega^-_0\Vert_{L^2}^2.
\]

We next estimate the pure second derivatives. We apply $\partial_i^2$, $i=1,2$, to the equations for $\Omega^\pm$ and take the $L^2$ inner products with $\partial_i^2\Omega^\pm$. The transport terms $\pm\partial_2\Omega^\pm$ and the pressure do not contribute, the terms $\Omega^\mp\cdot\nabla\partial_i^2\Omega^\pm$ vanish by incompressibility, and for each $i=1,2$ the diffusion form satisfies
\[
\begin{aligned}
	&\quad\nu_+\bigl(\|\partial_1\partial_i^2\Omega^+\|_{L^2}^2
	+\|\partial_1\partial_i^2\Omega^-\|_{L^2}^2\bigr)
	+2\nu_-(\partial_1\partial_i^2\Omega^-,
	\partial_1\partial_i^2\Omega^+)_{L^2}\\
	&=2\nu\|\partial_1\partial_i^2u\|_{L^2}^2
	+2\mu\|\partial_1\partial_i^2b\|_{L^2}^2.
\end{aligned}
\]
Combining with \eqref{L^2 energy estimate}, we obtain
\begin{align}\label{E0-preliminary}
	E_0(t)\lesssim E(0)+\sum_{j=1}^{6} \sup_{0\leq s\leq t}\left|\int_0^s N_j(\tau)\,d\tau\right|,
\end{align}
where
\[
\begin{aligned}
	N_1&\triangleq(\partial_1^2(\Omega^-\cdot\nabla\Omega^+),\partial_1^2\Omega^+)_{L^2},
	&\quad N_2&\triangleq(\partial_2^2(\Omega^-\cdot\nabla\Omega^+_2),\partial_2^2\Omega^+_2)_{L^2},\\
	N_3&\triangleq(\partial_1^2(\Omega^+\cdot\nabla\Omega^-),\partial_1^2\Omega^-)_{L^2},
	&\quad N_4&\triangleq(\partial_2^2(\Omega^+\cdot\nabla\Omega^-_2),\partial_2^2\Omega^-_2)_{L^2},\\
	N_5&\triangleq(\partial_2^2(\Omega^-\cdot\nabla\Omega^+_1),\partial_2^2\Omega^+_1)_{L^2},
	&\quad N_6&\triangleq(\partial_2^2(\Omega^+\cdot\nabla\Omega^-_1),\partial_2^2\Omega^-_1)_{L^2}.
\end{aligned}
\]

The terms $N_1-N_4$ can be treated directly. Indeed, $N_1$ and $N_3$ contain sufficient derivatives in the dissipative $x_1$-direction, while for $N_2$ and $N_4$ the divergence-free identities $\partial_2\Omega^\pm_2=-\partial_1\Omega^\pm_1$ provide the required $x_1$-derivatives. Hence, H\"older's inequality and anisotropic interpolation yield
\begin{align}\label{N_1234}
	\sum_{j=1}^{4}\int_0^t|N_j(\tau)|\,d\tau
	\lesssim \int_0^t\Vert(\Omega^+|\Omega^-)\Vert_{H^2}\Vert(\partial_1\Omega^+|\partial_1\Omega^-)\Vert_{H^2}^2\,d\tau
	\lesssim\nu_m^{-1}E_0^{\frac32}(t).
\end{align}

It remains to estimate $N_5$ and $N_6$. For $N_5$, expanding the derivatives gives
\[
\begin{aligned}
	N_5={}&(\partial_2^2\Omega^-_1\partial_1\Omega^+_1,\partial_2^2\Omega^+_1)_{L^2}
	+(\partial_2^2\Omega^-_2\partial_2\Omega^+_1,\partial_2^2\Omega^+_1)_{L^2}\\
	&+2(\partial_2\Omega^-_1\partial_1\partial_2\Omega^+_1,\partial_2^2\Omega^+_1)_{L^2}
	+2(\partial_2\Omega^-_2\partial_2^2\Omega^+_1,\partial_2^2\Omega^+_1)_{L^2}.
\end{aligned}
\]
Using $\partial_2\Omega^-_2=-\partial_1\Omega^-_1$ and integration by parts, we further write
\[
\begin{aligned}
	N_5={}&(\partial_2^2\Omega^-_1\partial_1\Omega^+_1,\partial_2^2\Omega^+_1)_{L^2}
	+(\partial_2\Omega^-_1\partial_1\partial_2\Omega^+_1,\partial_2^2\Omega^+_1)_{L^2}\\
	&-(\partial_2(\partial_2\Omega^-_1\partial_2\Omega^+_1),\partial_1\partial_2\Omega^+_1)_{L^2}
	+2(\partial_2\Omega^-_1\partial_1\partial_2\Omega^+_1,\partial_2^2\Omega^+_1)_{L^2}\\
	&+2(\Omega^-_1\partial_1\partial_2^2\Omega^+_1,\partial_2^2\Omega^+_1)_{L^2}
	+2(\Omega^-_1\partial_2^2\Omega^+_1,\partial_1\partial_2^2\Omega^+_1)_{L^2}\\
	\triangleq{}&N_{51}+N_{52}+N_{53}+N_{54}+N_{55}+N_{56}.
\end{aligned}
\]

For $j=2,\ldots,6$, H\"older's inequality and the definition of $E_1^\pm$ give
\begin{align}\label{N_{52}}
	\begin{split}
		&\quad\int_0^t|N_{5j}(\tau)|\,d\tau\\
		&\lesssim
		\Bigg(\int_0^t\sum_{k=0}^1\int_{\mathbb R^2}\big(|\partial_2^k\Omega_1^-|^2|\partial_2^2\Omega_1^+|^2
		+|\partial_2^k\Omega_1^+|^2|\partial_2^2\Omega_1^-|^2\big)\,dx\,d\tau\Bigg)^{\frac12}
		\left(\int_0^t\|\partial_1\Omega_1^+\|_{H^2}^2\,d\tau\right)^{\frac12}\\
		&\lesssim\nu_m^{-\frac12}E_0^{\frac12}(t)\big(E_1^+(t)+E_1^-(t)\big)^{\frac12}.
	\end{split}
\end{align}

The only remaining difficulty is $N_{51}$. A direct estimate does not provide sufficient time integrability, and its treatment requires a further use of the propagation structure of the Els\"asser system. We isolate the required estimate in the following proposition, whose proof is postponed to the next subsection.

\begin{pro}[Estimate for the critical term $N_{51}$]
	\label{prop:N51}
	Let $(\Omega^+,\Omega^-)$ be a smooth solution of the Els\"asser
	system \eqref{Elsasser-system} with $\gamma=1$ on $[0,T]$. Then, for every
	$t\in[0,T]$,
	\[
	\left|\int_0^t N_{51}(\tau)\,d\tau\right|
	\lesssim
	\nu_m^{-1}E_0^2(t)
	+\nu_M^{\frac12}\nu_m^{-\frac12}E_0^{\frac32}(t)
	+\nu_m^{-\frac12}E_0^{\frac12}(t)
	\bigl(E_1^+(t)+E_1^-(t)\bigr)^{\frac12}.
	\]
\end{pro}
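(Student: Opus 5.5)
The plan is to exploit the counter-propagation of $\Omega^\pm$ through a double substitution and an integration by parts in time, as outlined in the introduction. With $\gamma=1$, the first component of the $\Omega^-$ equation in \eqref{Elsasser-system} gives
\[
\partial_2\Omega^-_1=-\partial_t\Omega^-_1+\nu_+\partial_1^2\Omega^-_1+\nu_-\partial_1^2\Omega^+_1-(\Omega^+\cdot\nabla\Omega^-)_1-\partial_1p .
\]
We use this identity for one of the two $x_2$-derivatives in $\partial_2^2\Omega^-_1$ inside $N_{51}$. This writes $N_{51}$ as the sum of a time-derivative term $-(\partial_t\partial_2\Omega^-_1\,\partial_1\Omega^+_1,\partial_2^2\Omega^+_1)_{L^2}$, a dissipative term and a nonlinear (including pressure) term.

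\textbf{Integration by parts in time.} In the time-derivative term we integrate by parts in $\tau$ over $[0,t]$. This produces the boundary terms $\big[(\partial_2\Omega^-_1\,\partial_1\Omega^+_1,\partial_2^2\Omega^+_1)_{L^2}\big]_0^t$, which are bounded by $E_0^{3/2}(t)$ via $\|\partial_2\Omega^-_1\|_{L^\infty}\lesssim\|\Omega^-\|_{H^2}$. Since $\nu_M^{1/2}\nu_m^{-1/2}\ge1$, they fit into the second term of the claimed bound. We also obtain the terms in which $\partial_t$ falls on $\partial_1\Omega^+_1$ or on $\partial_2^2\Omega^+_1$. There we substitute the $\Omega^+$ equation,
\[
\partial_t\Omega^+=\partial_2\Omega^+ +\nu_+\partial_1^2\Omega^+ +\nu_-\partial_1^2\Omega^- -\Omega^-\cdot\nabla\Omega^+ -\nabla p .
\]
The transport parts combine into $\int(\partial_2\Omega^-_1\,\partial_2(\partial_1\Omega^+_1\partial_2^2\Omega^+_1))$. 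One integration by parts in $x_2$ turns this into $-N_{51}$. Because $\Omega^+$ and $\Omega^-$ travel in opposite directions, this carries the sign that gives $2\int_0^tN_{51}\,d\tau=$ (boundary) $+$ (dissipative) $+$ (nonlinear), and we can solve for $\int_0^tN_{51}$. The very first step, and the check of this sign, have to be done carefully, since for co-propagating waves the two contributions would cancel instead.

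\textbf{Dissipative remainders.} We note that $\nu_+\partial_1^2\Omega^\mp+\nu_-\partial_1^2\Omega^\pm=\nu\partial_1^2u\mp\mu\partial_1^2b$. Each such term therefore carries a factor $\nu^{1/2}\cdot\nu^{1/2}\|\partial_1u\|_{H^2}$ or $\mu^{1/2}\cdot\mu^{1/2}\|\partial_1b\|_{H^2}$, possibly after one integration by parts in $x_1$ when the extra $\partial_1^2$ sits on $\partial_2^2\Omega^+_1$. The other factors are one $L^\infty$ factor with an $x_1$-derivative, such as $\|\partial_1\Omega^+_1\|_{L^\infty}\lesssim\|\partial_1\Omega^+\|_{H^2}$, paid in $L^2_t$ by $\nu_m^{-1/2}E_0^{1/2}$, and one factor bounded by $E_0^{1/2}$. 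Cauchy--Schwarz in time then gives $\nu_M^{1/2}\nu_m^{-1/2}E_0^{3/2}(t)$.

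\textbf{Quartic remainders (main obstacle).} These come from $\Omega^\pm\cdot\nabla\Omega^\mp$ and the pressure. When two of the factors carry $x_1$-derivatives, or the divergence-free identity $\partial_2\Omega_2^\pm=-\partial_1\Omega_1^\pm$ supplies one, anisotropic interpolation gives $\int_0^t\|(\Omega^+|\Omega^-)\|_{H^2}^2\|(\partial_1\Omega^+|\partial_1\Omega^-)\|_{H^2}^2\,d\tau\lesssim\nu_m^{-1}E_0^2$. The pressure is handled via $-\Delta p=\nabla\cdot\nabla\cdot(\Omega^-\otimes\Omega^+)$ and boundedness of Riesz transforms, again distributing one $\partial_1$ onto each of two factors. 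The hard case is the term $\partial_2^2(\Omega^-_1\partial_1\Omega^+_1+\Omega^-_2\partial_2\Omega^+_1)$ tested against $\partial_2\Omega^-_1\,\partial_1\Omega^+_1$, which contains three $x_2$-derivatives and only one horizontal derivative. For it I would first integrate by parts in $x_2$ to move one $\partial_2$ off the top-order factor. I would then replace $\partial_2\Omega_2^-$ by $-\partial_1\Omega_1^-$. Any leftover product of the form $|\partial_2^k\Omega^-_1|\,|\partial_2^2\Omega^+_1|$ or its mirror image is estimated by $(E_1^+ +E_1^-)^{1/2}$ times $\big(\int_0^t\|\partial_1\Omega\|_{H^2}^2\big)^{1/2}$, exactly as in \eqref{N_{52}}. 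This yields the last term $\nu_m^{-1/2}E_0^{1/2}(E_1^++E_1^-)^{1/2}$ in the statement. Organizing these quartic terms so that each either has two horizontal derivatives or reduces to an $E_1^\pm$-type product is the step I expect to require the most care.
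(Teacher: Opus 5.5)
Your argument is correct and uses the same mechanism as the paper's proof. You replace $\partial_2\Omega^-_1$ using the $\Omega^-$ equation, integrate by parts in time, feed $\partial_t\Omega^+$ back through the $\Omega^+$ equation, and use the counter-propagation sign to obtain $2\int_0^tN_{51}\,d\tau$. Your decomposition, however, is different.

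The paper first integrates by parts in $x_1$, see \eqref{first-second term}, and substitutes in $-(\partial_1\partial_2^2\Omega^-_1\,\Omega^+_1,\partial_2^2\Omega^+_1)_{L^2}$. The factor that survives the time integration by parts is then $\partial_1\partial_2\Omega^-_1$, which already carries a horizontal derivative, so the remainder estimates are routine. The price is the cubic term $(\partial_2^2\Omega^-_1\,\Omega^+_1,\partial_1\partial_2^2\Omega^+_1)_{L^2}$. It has too few horizontal derivatives and is the only source of the $E_1^\pm$ contribution in the statement, see \eqref{N51firstterm}.

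You substitute directly in $N_{51}$. Apart from the time-boundary term and the dissipative remainders (which carry $\nu$ or $\mu$), everything left is quartic, so no such cubic term arises. The cost is that your surviving factor $\partial_2\Omega^-_1$ carries no $\partial_1$, and the remainders need more care than your sketch indicates:
\begin{itemize}
\item The terms with $\partial_1^3(\nu u_1+\mu b_1)$, $\partial_1^2\partial_2^2(\nu u_1+\mu b_1)$ and $\partial_1^2p$ need an integration by parts in $x_1$. They also need mixed norms such as $\|\partial_1^2(\nu u_1+\mu b_1)\|_{L^2_{x_1}L^\infty_{x_2}}$ paired with $\|\partial_2\Omega^-_1\|_{L^\infty_{x_1}L^2_{x_2}}$; putting $\partial_2\Omega^-_1$ in $L^\infty$ there costs a power of $\nu_M/\nu_m$.
\item The terms containing $\Omega^-_2$ close with $\|\Omega^-_2\|_{L^2_{x_1}L^\infty_{x_2}}\lesssim\|\Omega^-_2\|_{L^2}^{1/2}\|\partial_1\Omega^-_1\|_{L^2}^{1/2}$.
\end{itemize}
With these choices every quartic term is bounded by $\int_0^t\|(\Omega^+|\Omega^-)\|_{H^2}^2\|(\partial_1\Omega^+|\partial_1\Omega^-)\|_{H^2}^2\,d\tau$. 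Your route therefore gives the proposition even without the $E_1^\pm$ term.

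Two slips should be fixed.

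First, $\|\partial_2\Omega^-_1\|_{L^\infty}\lesssim\|\Omega^-\|_{H^2}$ is false in two dimensions, since $H^1(\mathbb R^2)\not\hookrightarrow L^\infty$. The boundary term is still $\lesssim E_0^{3/2}$, via $\|\partial_2\Omega^-_1\|_{L^4}\|\partial_1\Omega^+_1\|_{L^4}\|\partial_2^2\Omega^+_1\|_{L^2}$. Inside time integrals you should use $\|\partial_2\Omega^\pm\|_{L^\infty}\lesssim\|\Omega^\pm\|_{H^2}^{1/2}\|\partial_1\Omega^\pm\|_{H^2}^{1/2}$.

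Second, using $E_1^\pm$ on a quartic term ``exactly as in \eqref{N_{52}}'' leaves an extra factor $\sup_\tau\|(\Omega^+|\Omega^-)\|_{H^2}$. It produces $\nu_m^{-1/2}E_0(E_1^++E_1^-)^{1/2}$, which is not of the stated form. Moreover, your $x_2$-integration by parts creates the product $\partial_2^2\Omega^-_1\,\partial_1\Omega^+_1\,\Omega^-_2\,\partial_2^2\Omega^+_1$. It contains $|\partial_2^2\Omega^-_1|\,|\partial_2^2\Omega^+_1|$, which is not an $E_1^\pm$ density at all. In both cases the anisotropic estimates above suffice, so $E_1^\pm$ should not be invoked there.
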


Combining Proposition \ref{prop:N51} with \eqref{N_{52}}, we obtain
\begin{align}\label{N_5}
	\left|\int_0^t N_5(\tau)\,d\tau\right|
	\lesssim\nu_m^{-1}E_0^2(t)+\nu_M^{\frac12}\nu_m^{-\frac12}E_0^{\frac32}(t)
	+\nu_m^{-\frac12}E_0^{\frac12}(t)\big(E_1^+(t)+E_1^-(t)\big)^{\frac12}.
\end{align}
By symmetry,
\begin{align}\label{N_6}
	\left|\int_0^t N_6(\tau)\,d\tau\right|
	\lesssim\nu_m^{-1}E_0^2(t)+\nu_M^{\frac12}\nu_m^{-\frac12}E_0^{\frac32}(t)
	+\nu_m^{-\frac12}E_0^{\frac12}(t)\big(E_1^+(t)+E_1^-(t)\big)^{\frac12}.
\end{align}
Collecting \eqref{E0-preliminary}, \eqref{N_1234}, \eqref{N_5}, and \eqref{N_6}, we obtain
\begin{align}\label{E_0-qian}
	E_0(t)\lesssim E(0)+\frac{1+\nu_M^{\frac12}\nu_m^{\frac12}}{\nu_m}E^{\frac32}(t)
	+\nu_m^{-1}E^2(t)
	+\nu_m^{-\frac12}E_0^{\frac12}(t)\big(E_1^+(t)+E_1^-(t)\big)^{\frac12}.
\end{align}
By Young's inequality,
\[
C\nu_m^{-\frac12}E_0^{\frac12}(t)\big(E_1^+(t)+E_1^-(t)\big)^{\frac12}
\leq\frac12E_0(t)+C\nu_m^{-1}\big(E_1^+(t)+E_1^-(t)\big).
\]
Absorbing the first term into the left-hand side of \eqref{E_0-qian} yields \eqref{E_0(t)}.

\subsection{Proof of Proposition \ref{prop:N51}}\label{subsec:N51}

\begin{proof}
	Integrating by parts in $x_1$, we obtain
	\begin{equation}\label{first-second term}
		N_{51}=-(\partial_1\partial_2^2\Omega^-_1\Omega^+_1,\partial_2^2\Omega^+_1)_{L^2}
		-(\partial_2^2\Omega^-_1\Omega^+_1,\partial_1\partial_2^2\Omega^+_1)_{L^2}.
	\end{equation}
	The second term on the right-hand side can be estimated directly:
	\begin{align}\label{N51firstterm}
		\begin{split}
			\int_0^t\big|(\partial_2^2\Omega^-_1\Omega^+_1,\partial_1\partial_2^2\Omega^+_1)_{L^2}\big|\,d\tau
			&\lesssim
			\left(\int_0^t\|\partial_1\partial_2^2\Omega^+_1\|_{L^2}^2\,d\tau\right)^{\frac12}
			\left(\int_0^t\int_{\mathbb R^2}|\Omega^+_1|^2|\partial_2^2\Omega^-_1|^2\,dx\,d\tau\right)^{\frac12}\\
			&\lesssim\nu_m^{-\frac12}E_0^{\frac12}(t)\big(E_1^+(t)+E_1^-(t)\big)^{\frac12}.
		\end{split}
	\end{align}
	
	We now focus on the first term in \eqref{first-second term}. From the equation for $\Omega^-$,
	\[
	-\partial_2\Omega^-=\partial_t\Omega^-+\Omega^+\cdot\nabla\Omega^--\nu_+\partial_1^2\Omega^--\nu_-\partial_1^2\Omega^++\nabla p.
	\]
	It follows that
	\begin{align}\label{prop1-eq1-part1}
		\begin{split}
			-(\partial_1\partial_2^2\Omega^-_1\Omega^+_1,\partial_2^2\Omega^+_1)_{L^2}
			={}&-(\partial_1\partial_2(-\partial_t\Omega^-_1)\Omega^+_1,\partial_2^2\Omega^+_1)_{L^2}\\
			&-(\partial_1\partial_2(-\Omega^+\cdot\nabla\Omega^-_1)\Omega^+_1,\partial_2^2\Omega^+_1)_{L^2}\\
			&-(\partial_1\partial_2(\nu_+\partial_1^2\Omega^-_1+\nu_-\partial_1^2\Omega^+_1)\Omega^+_1,\partial_2^2\Omega^+_1)_{L^2}\\
			&-(\partial_1\partial_2(-\partial_1p)\Omega^+_1,\partial_2^2\Omega^+_1)_{L^2}\\
			\triangleq{}&N_{511}+N_{512}+N_{513}+N_{514}.
		\end{split}
	\end{align}
	
	We shall repeatedly use
	\begin{align}\label{nu and mu}
		\nu_+\partial_1^2\Omega^\pm+\nu_-\partial_1^2\Omega^\mp=\nu\partial_1^2u\pm\mu\partial_1^2b,
	\end{align}
	and hence
	\[
	\|\nu_+\partial_1^2\Omega^\pm+\nu_-\partial_1^2\Omega^\mp\|_{L^2}
	\lesssim\nu\|\partial_1^2u\|_{L^2}+\mu\|\partial_1^2b\|_{L^2}.
	\]
	
	For $N_{511}$, integration by parts in time gives
	\[
	N_{511}=\frac{d}{dt}(\partial_1\partial_2\Omega^-_1\Omega^+_1,\partial_2^2\Omega^+_1)_{L^2}
	-(\partial_1\partial_2\Omega^-_1,\partial_t(\Omega^+_1\partial_2^2\Omega^+_1))_{L^2}
	\triangleq N_{511}^1+N_{511}^2.
	\]
	Using
	\[
	-\partial_t\Omega^+=-\partial_2\Omega^++\Omega^-\cdot\nabla\Omega^+-\nu_+\partial_1^2\Omega^+-\nu_-\partial_1^2\Omega^-+\nabla p,
	\]
	we obtain
	\[
	\begin{aligned}
		-\partial_t(\Omega^+_1\partial_2^2\Omega^+_1)
		={}&-\partial_2(\Omega^+_1\partial_2^2\Omega^+_1)
		+\big(\Omega^-\cdot\nabla\Omega^+_1-\nu_+\partial_1^2\Omega^+_1-\nu_-\partial_1^2\Omega^-_1+\partial_1p\big)\partial_2^2\Omega^+_1\\
		&+\partial_2^2\big(\Omega^-\cdot\nabla\Omega^+_1-\nu_+\partial_1^2\Omega^+_1-\nu_-\partial_1^2\Omega^-_1+\partial_1p\big)\Omega^+_1.
	\end{aligned}
	\]
	Therefore,
	\[
	\begin{aligned}
		N_{511}^2
		={}&(\partial_1\partial_2^2\Omega^-_1\Omega^+_1,\partial_2^2\Omega^+_1)_{L^2}\\
		&+(\partial_1\partial_2\Omega^-_1,(\Omega^-\cdot\nabla\Omega^+_1-\nu_+\partial_1^2\Omega^+_1-\nu_-\partial_1^2\Omega^-_1+\partial_1p)\partial_2^2\Omega^+_1)_{L^2}\\
		&+(\partial_1\partial_2\Omega^-_1,\partial_2^2(\Omega^-\cdot\nabla\Omega^+_1-\nu_+\partial_1^2\Omega^+_1-\nu_-\partial_1^2\Omega^-_1+\partial_1p)\Omega^+_1)_{L^2}\\
		\triangleq{}&N_{511}^{21}+N_{511}^{22}+N_{511}^{23}.
	\end{aligned}
	\]
	Integrating by parts in $x_2$ shows that
	\[
	N_{511}^{21}=(\partial_1\partial_2^2\Omega^-_1\,\Omega^+_1,\partial_2^2\Omega^+_1)_{L^2}\triangleq\mathcal T ,
	\]
	that is, $N_{511}^{21}$ coincides with \emph{minus} the left-hand side of \eqref{prop1-eq1-part1}. Writing $-\mathcal T=N_{511}+\dots+N_{514}$ and $N_{511}=N_{511}^1+\mathcal T+N_{511}^{22}+N_{511}^{23}$, we obtain
	\[
	-2\mathcal T=N_{511}^1+N_{511}^{22}+N_{511}^{23}+N_{512}+N_{513}+N_{514},
	\]
	so it suffices to bound the time integrals of the terms on the right. The coefficient of $\mathcal T$ is $-2\neq0$; this is where the counter-propagation of $\Omega^\pm$ enters (for co-propagating waves the two occurrences of $\mathcal T$ would cancel).
	
	For $N_{511}^{22}$, by H\"older's inequality, anisotropic interpolation, \eqref{nu and mu}, and
	\[
	-\Delta p=\nabla\cdot\nabla\cdot(\Omega^-\otimes\Omega^+),
	\]
	we have
	\begin{align}\label{N_{511}^{22}}
		\begin{split}
			&\quad|N_{511}^{22}|\\
			&\lesssim
			\|\partial_1\partial_2\Omega^-_1\|_{L^2_{x_1}L^\infty_{x_2}}
			\|\partial_2^2\Omega^+_1\|_{L^\infty_{x_1}L^2_{x_2}}
			\|\Omega^-\cdot\nabla\Omega^+_1-\nu_+\partial_1^2\Omega^+_1-\nu_-\partial_1^2\Omega^-_1+\partial_1p\|_{L^2}\\
			&\lesssim
			\|\partial_1\partial_2\Omega^-_1\|_{L^2}^{\frac12}
			\|\partial_1\partial_2^2\Omega^-_1\|_{L^2}^{\frac12}
			\|\partial_2^2\Omega^+_1\|_{L^2}^{\frac12}
			\|\partial_1\partial_2^2\Omega^+_1\|_{L^2}^{\frac12}\\
			&\qquad\times\big(\nu\|\partial_1^2u\|_{L^2}+\mu\|\partial_1^2b\|_{L^2}
			+\|\partial_1(\Omega^-\otimes\Omega^+)\|_{L^2}  + \Vert \Omega^-_2 \Vert_{L^2_{x_1}L^\infty_{x_2}} \Vert \partial_2 \Omega^{+} _1 \Vert_{L^\infty_{x_1}L^2_{x_2}} \big)\\
			&\lesssim
			\|(\Omega^-|\Omega^+)\|_{H^2}^2\|(\partial_1\Omega^-|\partial_1\Omega^+)\|_{H^2}^2+\nu\|(\Omega^-|\Omega^+)\|_{H^2}\|\partial_1u\|_{H^2}\|(\partial_1\Omega^-|\partial_1\Omega^+)\|_{H^2}\\
			&\qquad+\mu\|(\Omega^-|\Omega^+)\|_{H^2}\|\partial_1b\|_{H^2}\|(\partial_1\Omega^-|\partial_1\Omega^+)\|_{H^2}.
		\end{split}
	\end{align}
	
	For $N_{511}^{23}$, integration by parts yields
	\[
	\begin{aligned}
		N_{511}^{23}
		={}&-(\partial_1\partial_2^2\Omega^-_1,\partial_2(\Omega^-\cdot\nabla\Omega^+_1-\nu_+\partial_1^2\Omega^+_1-\nu_-\partial_1^2\Omega^-_1+\partial_1p)\Omega^+_1)_{L^2}\\
		&-(\partial_1\partial_2\Omega^-_1,\partial_2(\Omega^-\cdot\nabla\Omega^+_1-\nu_+\partial_1^2\Omega^+_1-\nu_-\partial_1^2\Omega^-_1+\partial_1p)\partial_2\Omega^+_1)_{L^2}.
	\end{aligned}
	\]
	Thus, by H\"older's inequality, Sobolev embedding, interpolation, and \eqref{nu and mu},
	\begin{align}\label{N_{511}^{23}}
		\begin{split}
			&\quad|N_{511}^{23}|\\
			&\lesssim
			\|\partial_1\partial_2^2\Omega^-_1\|_{L^2}\|\Omega^+_1\|_{L^\infty}
			\|\partial_2(\Omega^-\cdot\nabla\Omega^+_1-\nu_+\partial_1^2\Omega^+_1-\nu_-\partial_1^2\Omega^-_1+\partial_1p)\|_{L^2}\\
			&\quad+
			\|\partial_1\partial_2\Omega^-_1\|_{L^2_{x_1}L^\infty_{x_2}}
			\|\partial_2\Omega^+_1\|_{L^\infty_{x_1}L^2_{x_2}}
			\|\partial_2(\Omega^-\cdot\nabla\Omega^+_1-\nu_+\partial_1^2\Omega^+_1-\nu_-\partial_1^2\Omega^-_1+\partial_1p)\|_{L^2}\\
			&\lesssim
			\|\partial_1\partial_2^2\Omega^-_1\|_{L^2}
			\|\Omega^+_1\|_{L^2}^{\frac14}\|\partial_1\Omega^+_1\|_{L^2}^{\frac14}
			\|\partial_2\Omega^+_1\|_{L^2}^{\frac14}\|\partial_1\partial_2\Omega^+_1\|_{L^2}^{\frac14}\\
			&\qquad\times\big(\nu\|\partial_2\partial_1^2u\|_{L^2}+\mu\|\partial_2\partial_1^2b\|_{L^2}
			+\|\partial_2\nabla\cdot(\Omega^-\otimes\Omega^+)\|_{L^2}\big)\\
			&\quad+
			\|\partial_1\partial_2\Omega^-_1\|_{L^2}^{\frac12}
			\|\partial_1\partial_2^2\Omega^-_1\|_{L^2}^{\frac12}
			\|\partial_2\Omega^+_1\|_{L^2}^{\frac12}
			\|\partial_1\partial_2\Omega^+_1\|_{L^2}^{\frac12}\\
			&\qquad\times\big(\nu\|\partial_2\partial_1^2u\|_{L^2}+\mu\|\partial_2\partial_1^2b\|_{L^2}
			+\|\partial_2\nabla\cdot(\Omega^-\otimes\Omega^+)\|_{L^2}\big)\\
			&\lesssim
			\|(\Omega^-|\Omega^+)\|_{H^2}^2\|(\partial_1\Omega^-|\partial_1\Omega^+)\|_{H^2}^2\\
			&\qquad+\nu\|(\Omega^-|\Omega^+)\|_{H^2}\|\partial_1u\|_{H^2}\|(\partial_1\Omega^-|\partial_1\Omega^+)\|_{H^2}+\mu\|(\Omega^-|\Omega^+)\|_{H^2}\|\partial_1b\|_{H^2}\|(\partial_1\Omega^-|\partial_1\Omega^+)\|_{H^2}.
		\end{split}
	\end{align}
	
	The time boundary term satisfies
	\[
	\left|(\partial_1\partial_2\Omega^-_1\Omega^+_1,\partial_2^2\Omega^+_1)_{L^2}\right|
	\lesssim E_0^{\frac32}(t).
	\]
	Consequently, \eqref{N_{511}^{22}} and \eqref{N_{511}^{23}} yield
	\begin{align}\label{511}
		\left|\int_0^tN_{511}\,d\tau-\int_0^t(\partial_1\partial_2^2\Omega^-_1\Omega^+_1,\partial_2^2\Omega^+_1)_{L^2}\,d\tau\right|
		\lesssim E_0^{\frac32}(t)+\nu_m^{-1}E_0^2(t)+\nu_M^{\frac12}\nu_m^{-\frac12}E_0^{\frac32}(t).
	\end{align}
	
	For $N_{512}$, integration by parts gives
	\[
	N_{512}=(\partial_2(-\Omega^+\cdot\nabla\Omega^-_1)\Omega^+_1,\partial_1\partial_2^2\Omega^+_1)_{L^2}
	+(\partial_2(-\Omega^+\cdot\nabla\Omega^-_1)\partial_1\Omega^+_1,\partial_2^2\Omega^+_1)_{L^2},
	\]
	and therefore
	\begin{align}\label{512}
		\begin{split}
			|N_{512}|
			&\lesssim\|\partial_2(\Omega^+\cdot\nabla\Omega^-_1)\|_{L^2}
			\Big(\|\Omega^+_1\|_{L^\infty}\|\partial_1\partial_2^2\Omega^+_1\|_{L^2}
			+\|\partial_1\Omega^+_1\|_{L^\infty}\|\partial_2^2\Omega^+_1\|_{L^2}\Big)\\
			&\lesssim\|(\Omega^-|\Omega^+)\|_{H^2}^2\|(\partial_1\Omega^-|\partial_1\Omega^+)\|_{H^2}^2.
		\end{split}
	\end{align}
	
	For $N_{513}$, by \eqref{nu and mu} and interpolation,
	\begin{align}\label{513}
		\begin{split}
			|N_{513}|
			&\lesssim
			\big(\nu\|\partial_2\partial_1^2u\|_{L^2}+\mu\|\partial_2\partial_1^2b\|_{L^2}\big)
			\Big(\|\partial_1\Omega^+_1\|_{L^\infty}\|\partial_2^2\Omega^+_1\|_{L^2}
			+\|\Omega^+_1\|_{L^\infty}\|\partial_1\partial_2^2\Omega^+_1\|_{L^2}\Big)\\
			&\lesssim
			\nu\|\partial_2\partial_1^2u\|_{L^2}\|(\Omega^-|\Omega^+)\|_{H^2}\|(\partial_1\Omega^-|\partial_1\Omega^+)\|_{H^2}\\
			&\qquad+\mu\|\partial_2\partial_1^2b\|_{L^2}\|(\Omega^-|\Omega^+)\|_{H^2}\|(\partial_1\Omega^-|\partial_1\Omega^+)\|_{H^2}.
		\end{split}
	\end{align}
	
	Finally, using the pressure equation,
	\begin{align}\label{514}
		\begin{split}
			|N_{514}|
			&\lesssim\|\partial_2\nabla\cdot(\Omega^-\otimes\Omega^+)\|_{L^2}
			\Big(\|\Omega^+_1\|_{L^\infty}\|\partial_1\partial_2^2\Omega^+_1\|_{L^2}
			+\|\partial_1\Omega^+_1\|_{L^\infty}\|\partial_2^2\Omega^+_1\|_{L^2}\Big)\\
			&\lesssim\|(\Omega^-|\Omega^+)\|_{H^2}^2\|(\partial_1\Omega^-|\partial_1\Omega^+)\|_{H^2}^2.
		\end{split}
	\end{align}
	
	Combining \eqref{511}--\eqref{514} with the identity for $-2\mathcal T$, we obtain
	\begin{align}\label{N51secondterm}
		\left|\int_0^t(\partial_1\partial_2^2\Omega^-_1\Omega^+_1,\partial_2^2\Omega^+_1)_{L^2}\,d\tau\right|
		\lesssim E_0^{\frac32}(t)+\nu_m^{-1}E_0^2(t)+\nu_M^{\frac12}\nu_m^{-\frac12}E_0^{\frac32}(t).
	\end{align}
	Since $\nu_M\geq\nu_m$, the first term on the right-hand side is controlled by the last one. Combining \eqref{N51secondterm} with \eqref{N51firstterm} completes the proof of Proposition \ref{prop:N51}.
\end{proof}

\subsection{Estimate for $E_1^\pm(t)$}\label{Estimate of energy E_1(t)}

Having established \eqref{E_0(t)}, we now turn to the interaction energies $E_1^\pm(t)$, which capture the nonlinear interactions involving the transverse derivatives. By symmetry, it suffices to estimate $E_1^+(t)$; the estimate for $E_1^-(t)$ follows in the same way.

For $k\in\{0,1\}$, the first component of the equation for $\Omega^+$ gives
\[
\frac12\partial_2|\partial_2^k\Omega_1^+|^2
=\frac12\partial_t|\partial_2^k\Omega_1^+|^2
+\partial_2^k(\Omega^-\cdot\nabla\Omega_1^+)\partial_2^k\Omega_1^+
+\partial_1\partial_2^kp\,\partial_2^k\Omega_1^+
-\big(\nu_+\partial_1^2\partial_2^k\Omega^+_1+\nu_-\partial_1^2\partial_2^k\Omega^-_1\big)\partial_2^k\Omega_1^+.
\]
Integrating this identity with respect to $y_2$ from $-\infty$ to $x_2$, multiplying by $|\partial_2^2\Omega_1^-|^2$, and then integrating over $\mathbb R^2$, we obtain
\begin{align}\label{key-e1}
	\begin{split}
		\frac12\int_{\mathbb R^2}|\partial_2^k\Omega_1^+|^2|\partial_2^2\Omega_1^-|^2\,dx
		={}&\frac12\int_{\mathbb R^2}\int_{-\infty}^{x_2}\partial_t|\partial_2^k\Omega_1^+|^2\,dy_2\,|\partial_2^2\Omega_1^-|^2\,dx\\
		&+\int_{\mathbb R^2}\int_{-\infty}^{x_2}\partial_2^k(\Omega^-\cdot\nabla\Omega_1^+)\partial_2^k\Omega_1^+\,dy_2\,|\partial_2^2\Omega_1^-|^2\,dx\\
		&+\int_{\mathbb R^2}\int_{-\infty}^{x_2}\partial_1\partial_2^kp\,\partial_2^k\Omega_1^+\,dy_2\,|\partial_2^2\Omega_1^-|^2\,dx\\
		&-\int_{\mathbb R^2}\int_{-\infty}^{x_2}\big(\nu_+\partial_1^2\partial_2^k\Omega^+_1+\nu_-\partial_1^2\partial_2^k\Omega^-_1\big)
		\partial_2^k\Omega_1^+\,dy_2\,|\partial_2^2\Omega_1^-|^2\,dx\\
		\triangleq{}&{J_1+J_2+J_3+J_4.}
	\end{split}
\end{align}

We first estimate $J_1$. Integration by parts in time gives
\[
J_1=\frac12\frac{d}{dt}\int_{\mathbb R^2}\int_{-\infty}^{x_2}|\partial_2^k\Omega_1^+|^2\,dy_2\,|\partial_2^2\Omega_1^-|^2\,dx
-\frac12\int_{\mathbb R^2}\int_{-\infty}^{x_2}|\partial_2^k\Omega_1^+|^2\,dy_2\,\partial_t|\partial_2^2\Omega_1^-|^2\,dx
\triangleq J_{11}+J_{12}.
\]
For $J_{11}$, H\"older's inequality and interpolation yield
\begin{align}\label{J_{11}}
	\begin{split}
		\left|\int_0^tJ_{11}(\tau)\,d\tau\right|
		&\lesssim\sup_{0\leq\tau\leq t}\|\partial_2^k\Omega_1^+\|_{L^\infty_{x_1}L^2_{x_2}}^2\|\partial_2^2\Omega_1^-\|_{L^2}^2\\
		&\lesssim\sup_{0\leq\tau\leq t}\|\partial_2^k\Omega_1^+\|_{L^2}\|\partial_1\partial_2^k\Omega_1^+\|_{L^2}\|\partial_2^2\Omega_1^-\|_{L^2}^2
		\lesssim E_0^2(t).
	\end{split}
\end{align}

For $J_{12}$, the equation for $\Omega^-$ gives
\[
\frac12\partial_t|\partial_2^2\Omega_1^-|^2
=-\frac12\partial_2|\partial_2^2\Omega_1^-|^2
-\partial_2^2(\Omega^+\cdot\nabla\Omega_1^-)\partial_2^2\Omega_1^-
-\partial_1\partial_2^2p\,\partial_2^2\Omega_1^-
+\big(\nu_+\partial_1^2\partial_2^2\Omega^-_1+\nu_-\partial_1^2\partial_2^2\Omega^+_1\big)\partial_2^2\Omega_1^-.
\]
Therefore,
\[
\begin{aligned}
	J_{12}={}&\frac12\int_{\mathbb R^2}\int_{-\infty}^{x_2}|\partial_2^k\Omega_1^+|^2\,dy_2\,\partial_2|\partial_2^2\Omega_1^-|^2\,dx\\
	&+\int_{\mathbb R^2}\int_{-\infty}^{x_2}|\partial_2^k\Omega_1^+|^2\,dy_2\,\partial_2^2(\Omega^+\cdot\nabla\Omega_1^-)\partial_2^2\Omega_1^-\,dx\\
	&+\int_{\mathbb R^2}\int_{-\infty}^{x_2}|\partial_2^k\Omega_1^+|^2\,dy_2\,\partial_1\partial_2^2p\,\partial_2^2\Omega_1^-\,dx\\
	&-\int_{\mathbb R^2}\int_{-\infty}^{x_2}|\partial_2^k\Omega_1^+|^2\,dy_2\,
	\big(\nu_+\partial_1^2\partial_2^2\Omega^-_1+\nu_-\partial_1^2\partial_2^2\Omega^+_1\big)\partial_2^2\Omega_1^-\,dx\\
	\triangleq{}&J_{121}+J_{122}+J_{123}+J_{124}.
\end{aligned}
\]

For $J_{121}$, integration by parts in $x_2$ gives
\[
J_{121}=-\frac12\int_{\mathbb R^2}|\partial_2^k\Omega_1^+|^2|\partial_2^2\Omega_1^-|^2\,dx,
\]
which can be absorbed into the left-hand side of \eqref{key-e1}.

For $J_{122}$, expanding the derivatives and using $\nabla\cdot\Omega^+=0$, we obtain
\[
\begin{aligned}
	J_{122}
	={}&\int_{\mathbb R^2}\int_{-\infty}^{x_2}|\partial_2^k\Omega_1^+|^2\,dy_2\,
	\partial_2^2\Omega^+\cdot\nabla\Omega_1^-\,\partial_2^2\Omega_1^-\,dx\\
	&+2\int_{\mathbb R^2}\int_{-\infty}^{x_2}|\partial_2^k\Omega_1^+|^2\,dy_2\,
	\partial_2\Omega^+\cdot\nabla\partial_2\Omega_1^-\,\partial_2^2\Omega_1^-\,dx\\
	&-\frac12\int_{\mathbb R^2}\int_{-\infty}^{x_2}\partial_1|\partial_2^k\Omega_1^+|^2\,dy_2\,
	\Omega_1^+|\partial_2^2\Omega_1^-|^2\,dx
	-\frac12\int_{\mathbb R^2}|\partial_2^k\Omega_1^+|^2\Omega_2^+|\partial_2^2\Omega_1^-|^2\,dx\\
	\triangleq{}&J_{1221}+J_{1222}+J_{1223}+J_{1224}.
\end{aligned}
\]

For $J_{1221}$, H\"older's inequality and anisotropic interpolation imply
\begin{align}\label{J1221}
	\begin{split}
		|J_{1221}|
		&\lesssim\|\partial_2^k\Omega_1^+\|_{L^\infty_{x_1}L^2_{x_2}}^2
		\Big(\|\partial_1\Omega_1^-\|_{L^\infty}\|\Omega_1^+\|_{H^2}\|\Omega_1^-\|_{H^2}\\
		&\qquad\qquad+\|\partial_1\partial_2\Omega_1^+\|_{L^\infty_{x_1}L^2_{x_2}}
		\|\partial_2\Omega_1^-\|_{L^2_{x_1}L^\infty_{x_2}}\|\Omega_1^-\|_{H^2}\Big)\\
		&\lesssim\big(\|\partial_1\Omega^+\|_{H^2}^2+\|\partial_1\Omega^-\|_{H^2}^2\big)
		\big(\|\Omega^+\|_{H^2}^3+\|\Omega^-\|_{H^2}^3\big).
	\end{split}
\end{align}
Similarly,
\begin{align}\label{J1222}
	\begin{split}
		|J_{1222}|
		&\lesssim\|\partial_2^k\Omega_1^+\|_{L^\infty_{x_1}L^2_{x_2}}^2
		\Big(\|\partial_1\Omega_1^+\|_{L^\infty}\|\Omega_1^-\|_{H^2}^2\\
		&\qquad\qquad+\|\partial_1\partial_2\Omega_1^-\|_{L^\infty_{x_1}L^2_{x_2}}
		\|\partial_2\Omega_1^+\|_{L^2_{x_1}L^\infty_{x_2}}\|\Omega_1^-\|_{H^2}\Big)\\
		&\lesssim\big(\|\partial_1\Omega^+\|_{H^2}^2+\|\partial_1\Omega^-\|_{H^2}^2\big)
		\big(\|\Omega^+\|_{H^2}^3+\|\Omega^-\|_{H^2}^3\big).
	\end{split}
\end{align}
For $J_{1223}$,
\begin{align}\label{J1223}
	|J_{1223}|
	\lesssim\|\partial_1\partial_2^k\Omega_1^+\|_{L^\infty_{x_1}L^2_{x_2}}
	\|\partial_2^k\Omega_1^+\|_{L^\infty_{x_1}L^2_{x_2}}
	\|\Omega_1^+\|_{L^\infty}\|\Omega_1^-\|_{H^2}^2
	\lesssim\|\partial_1\Omega_1^+\|_{H^2}^2\big(\|\Omega_1^+\|_{H^2}^3+\|\Omega_1^-\|_{H^2}^3\big).
\end{align}
Finally,
\begin{align}\label{J1224}
	|J_{1224}|
	\lesssim\|\Omega_2^+\|_{H^2}\int_{\mathbb R^2}|\partial_2^k\Omega_1^+|^2|\partial_2^2\Omega_1^-|^2\,dx.
\end{align}

For $J_{123}$, using
\[
-\Delta p=\nabla\cdot\nabla\cdot(\Omega^-\otimes\Omega^+),
\]
we obtain
\begin{align}\label{J123}
	\begin{split}
		|J_{123}|
		&\lesssim\|\partial_2^k\Omega_1^+\|_{L^\infty_{x_1}L^2_{x_2}}^2
		\|\partial_1\partial_2^2(\Omega^-\otimes\Omega^+)\|_{L^2}\|\partial_2^2\Omega_1^-\|_{L^2}\\
		&\lesssim\|\partial_2^k\Omega_1^+\|_{L^2}\|\partial_1\partial_2^k\Omega_1^+\|_{L^2}
		\big(\|\partial_1\Omega^-\|_{H^2}\|\Omega^+\|_{H^2}
		+\|\partial_1\Omega^+\|_{H^2}\|\Omega^-\|_{H^2}\big)\|\partial_2^2\Omega_1^-\|_{L^2}\\
		&\lesssim\big(\|\partial_1\Omega^+\|_{H^2}^2+\|\partial_1\Omega^-\|_{H^2}^2\big)
		\big(\|\Omega^+\|_{H^2}^3+\|\Omega^-\|_{H^2}^3\big).
	\end{split}
\end{align}

For $J_{124}$, integrating by parts in $x_1$ gives
\[
\begin{aligned}
	J_{124}={}&\int_{\mathbb R^2}\int_{-\infty}^{x_2}|\partial_2^k\Omega_1^+|^2\,dy_2\,
	\big(\nu_+\partial_1\partial_2^2\Omega^-_1+\nu_-\partial_1\partial_2^2\Omega^+_1\big)\partial_1\partial_2^2\Omega_1^-\,dx\\
	&+\int_{\mathbb R^2}\int_{-\infty}^{x_2}\partial_1|\partial_2^k\Omega_1^+|^2\,dy_2\,
	\big(\nu_+\partial_1\partial_2^2\Omega^-_1+\nu_-\partial_1\partial_2^2\Omega^+_1\big)\partial_2^2\Omega_1^-\,dx.
\end{aligned}
\]
By \eqref{nu and mu} and anisotropic interpolation,
\begin{align}\label{J124}
	\begin{split}
		|J_{124}|
		&\lesssim\|\partial_2^k\Omega_1^+\|_{L^\infty_{x_1}L^2_{x_2}}^2
		\big(\nu\|\partial_1\partial_2^2u\|_{L^2}+\mu\|\partial_1\partial_2^2b\|_{L^2}\big)
		\|\partial_1\partial_2^2\Omega_1^-\|_{L^2}\\
		&\quad+\|\partial_2^k\Omega_1^+\|_{L^\infty_{x_1}L^2_{x_2}}
		\|\partial_1\partial_2^k\Omega_1^+\|_{L^\infty_{x_1}L^2_{x_2}}
		\big(\nu\|\partial_1\partial_2^2u\|_{L^2}+\mu\|\partial_1\partial_2^2b\|_{L^2}\big)
		\|\partial_2^2\Omega_1^-\|_{L^2}\\
		&\lesssim\big(\nu\|\partial_1u\|_{H^2}+\mu\|\partial_1b\|_{H^2}\big)
		\|(\partial_1\Omega^+|\partial_1\Omega^-)\|_{H^2}
		\|(\Omega^+|\Omega^-)\|_{H^2}^2.
	\end{split}
\end{align}

Combining \eqref{J_{11}} and \eqref{J1221}--\eqref{J124}, and integrating in time, we obtain (the term $E_0^2(t)$ from \eqref{J_{11}} is absorbed into $\nu_M^{\frac12}\nu_m^{-\frac12}E_0^2(t)$ since $\nu_M\geq\nu_m$)
\begin{align}\label{J1}
	\left|\int_0^t(J_1(\tau) - J_{121}(\tau))\,d\tau\right|
	\lesssim \nu_M^{\frac12}\nu_m^{-\frac12}E_0^2(t)
	+\nu_m^{-1}E_0^{\frac52}(t)+E_0^{\frac12}(t)E_1^+(t).
\end{align}

We next estimate $J_2$. For $k=0$, using $\nabla\cdot\Omega^-=0$ and integrating by parts,
\begin{align}\label{J21}
	\begin{split}
		J_2
		&=\frac12\int_{\mathbb R^2}\int_{-\infty}^{x_2}\Omega^-\cdot\nabla|\Omega_1^+|^2\,dy_2\,|\partial_2^2\Omega_1^-|^2\,dx\\
		&=\frac12\int_{\mathbb R^2}\Omega_2^-|\Omega_1^+|^2|\partial_2^2\Omega_1^-|^2\,dx
		-\int_{\mathbb R^2}\int_{-\infty}^{x_2}\Omega_1^-|\Omega_1^+|^2\,dy_2\,
		\partial_1\partial_2^2\Omega_1^-\,\partial_2^2\Omega_1^-\,dx\\
		&\lesssim\|\Omega_2^-\|_{H^2}\int_{\mathbb R^2}|\Omega_1^+|^2|\partial_2^2\Omega_1^-|^2\,dx
		+\|(\Omega^+|\Omega^-) \|_{H^2}^3\|(\partial_1\Omega^+|\partial_1\Omega^-)\|_{H^2}^2.
	\end{split}
\end{align}

For $k=1$, the same argument yields
\begin{align}\label{J22}
	\begin{split}
		J_2
		={}&\frac12\int_{\mathbb R^2}\Omega_2^-|\partial_2\Omega_1^+|^2|\partial_2^2\Omega_1^-|^2\,dx
		-\int_{\mathbb R^2}\int_{-\infty}^{x_2}\Omega_1^-|\partial_2\Omega_1^+|^2\,dy_2\,
		\partial_1\partial_2^2\Omega_1^-\,\partial_2^2\Omega_1^-\,dx\\
		&+\int_{\mathbb R^2}\int_{-\infty}^{x_2}\partial_2\Omega_2^-|\partial_2\Omega_1^+|^2\,dy_2\,|\partial_2^2\Omega_1^-|^2\,dx\\
		&+\int_{\mathbb R^2}\int_{-\infty}^{x_2}\partial_2\Omega_1^-\,\partial_1\Omega_1^+\,\partial_2\Omega_1^+\,dy_2\,|\partial_2^2\Omega_1^-|^2\,dx\\
		\lesssim{}&\|\Omega_2^-\|_{H^2}\int_{\mathbb R^2}|\partial_2\Omega_1^+|^2|\partial_2^2\Omega_1^-|^2\,dx
		+\|(\Omega^+|\Omega^-) \|_{H^2}^3\|(\partial_1\Omega^+|\partial_1\Omega^-)\|_{H^2}^2.
	\end{split}
\end{align}
Consequently,
\begin{align}\label{J2}
	\int_0^t|J_2(\tau)|\,d\tau
	\lesssim E_0^{\frac12}(t)E_1^+(t)+\nu_m^{-1}E_0^{\frac52}(t).
\end{align}

For $J_3$, using the pressure equation and anisotropic interpolation,
\[
\begin{aligned}
	|J_3|
	&\lesssim\|\partial_1\partial_2^kp\|_{L^\infty_{x_1}L^2_{x_2}}
	\|\partial_2^k\Omega_1^+\|_{L^\infty_{x_1}L^2_{x_2}}\|\partial_2^2\Omega_1^-\|_{L^2}^2\\
	&\lesssim\|\partial_1\partial_2^kp\|_{L^2}^{\frac12}
	\|\partial_1^2\partial_2^kp\|_{L^2}^{\frac12}
	\|\partial_2^k\Omega_1^+\|_{L^2}^{\frac12}
	\|\partial_1\partial_2^k\Omega_1^+\|_{L^2}^{\frac12}\|\partial_2^2\Omega_1^-\|_{L^2}^2\\
	&\lesssim\|(\Omega^+|\Omega^-)\|_{H^2}^3
	\|(\partial_1\Omega^+|\partial_1\Omega^-)\|_{H^2}^2.
\end{aligned}
\]
Hence,
\begin{align}\label{J3}
	\int_0^t|J_3(\tau)|\,d\tau\lesssim\nu_m^{-1}E_0^{\frac52}(t).
\end{align}

Finally, integrating by parts in $x_1$ in $J_4$ gives
\[
\begin{aligned}
	J_4={}&\int_{\mathbb R^2}\int_{-\infty}^{x_2}
	\big(\nu_+\partial_1\partial_2^k\Omega^+_1+\nu_-\partial_1\partial_2^k\Omega^-_1\big)
	\partial_1\partial_2^k\Omega_1^+\,dy_2\,|\partial_2^2\Omega_1^-|^2\,dx\\
	&+\int_{\mathbb R^2}\int_{-\infty}^{x_2}
	\big(\nu_+\partial_1\partial_2^k\Omega^+_1+\nu_-\partial_1\partial_2^k\Omega^-_1\big)
	\partial_2^k\Omega_1^+\,dy_2\,\partial_1|\partial_2^2\Omega_1^-|^2\,dx .
\end{aligned}
\]
Using \eqref{nu and mu} and H\"older's inequality,
\[
\begin{aligned}
	|J_4|
	\lesssim{}&
	\big(\nu\|\partial_1\partial_2^ku\|_{L^\infty_{x_1}L^2_{x_2}}
	+\mu\|\partial_1\partial_2^kb\|_{L^\infty_{x_1}L^2_{x_2}}\big)
	\|\partial_1\partial_2^k\Omega^+\|_{L^\infty_{x_1}L^2_{x_2}}\|\partial_2^2\Omega_1^-\|_{L^2}^2\\
	&+\big(\nu\|\partial_1\partial_2^ku\|_{L^\infty_{x_1}L^2_{x_2}}
	+\mu\|\partial_1\partial_2^kb\|_{L^\infty_{x_1}L^2_{x_2}}\big)
	\|\partial_2^k\Omega^+\|_{L^\infty_{x_1}L^2_{x_2}}
	\|\partial_1\partial_2^2\Omega_1^-\|_{L^2}\|\partial_2^2\Omega_1^-\|_{L^2}\\
	\lesssim{}&
	\big(\nu\|\partial_1u\|_{H^2}+\mu\|\partial_1b\|_{H^2}\big)
	\|(\partial_1\Omega^+|\partial_1\Omega^-)\|_{H^2}\|(\Omega^+|\Omega^-)\|_{H^2}^2.
\end{aligned}
\]
Thus,
\begin{align}\label{J4}
	\int_0^t|J_4(\tau)|\,d\tau
	\lesssim\nu_M^{\frac12}\nu_m^{-\frac12}E_0^2(t).
\end{align}

Integrating \eqref{key-e1} in time, summing over $k=0,1$, and
combining \eqref{J1}--\eqref{J4}, we obtain
\[
E_1^+(t)
\lesssim
\nu_M^{\frac12}\nu_m^{-\frac12}E_0^2(t)
+\nu_m^{-1}E_0^{\frac52}(t)
+E_0^{\frac12}(t)E_1^+(t).
\]
Since $E_0(t)\leq E(t)$ and $E_1^+(t)\leq\nu_m E(t)$, it follows that
\[
E_1^+(t)
\lesssim
\frac{\nu_M^{\frac12}}{\nu_m^{\frac12}}E^2(t)
+\nu_m^{-1}E^{\frac52}(t)
+\nu_mE^{\frac32}(t).
\]
By symmetry, the same estimate holds for $E_1^-(t)$. This proves
\eqref{E1^{pm}(t)}.

\subsection{Proof of Theorem \ref{thm}}\label{subsec:close2}

Combining \eqref{E_0(t)} and \eqref{E1^{pm}(t)} with the definition of
$E(t)$, we obtain
\[
\begin{aligned}
	E(t)\leq{}&
	C E(0)
	+C\frac{1+\nu_M^{\frac12}\nu_m^{\frac12}}{\nu_m}
	E^{\frac32}(t)+C\frac{\nu_M^{\frac12}}{\nu_m^{\frac32}}E^2(t)
	+C\nu_m^{-2}E^{\frac52}(t).
\end{aligned}
\]
Here we used $\nu_m^{-1}\cdot\nu_mE^{\frac32}=E^{\frac32}\leq
\nu_M^{\frac12}\nu_m^{-\frac12}E^{\frac32}$ and
$\nu_m^{-1}E^2\leq\nu_M^{\frac12}\nu_m^{-\frac32}E^2$, both of which
follow from $\nu_M\geq\nu_m$. Set
\[
\delta_0
=\min\left\{
\nu_m^2,\frac{\nu_m}{\nu_M}
\right\}.
\]
By the definition of $\delta_0$,
\[
\frac{1+\nu_M^{\frac12}\nu_m^{\frac12}}{\nu_m}
\delta_0^{\frac12}
+\frac{\nu_M^{\frac12}}{\nu_m^{\frac32}}\delta_0
+\nu_m^{-2}\delta_0^{\frac32}
\lesssim 1.
\]
Assume that
\[
E(t)\leq c_1\delta_0
\qquad\text{for }0\leq t\leq T.
\]
If the absolute constant $c_1>0$ is sufficiently small, the nonlinear
terms in the preceding inequality can be absorbed, and hence
\[
E(t)\leq 2C E(0),
\qquad 0\leq t\leq T.
\]
Consequently, choosing the absolute constant $c_0>0$ sufficiently small,
Remark~\ref{rem:local} and a standard continuity argument give
\[
E(t)\leq 2C E(0),
\qquad t\geq0,
\]
provided that
\[
E(0)\leq c_0^2\delta_0.
\]
Since
\[
E(0)=2\big(\|u_0\|_{H^2}^2+\|b_0\|_{H^2}^2\big),
\]
after decreasing $c_0$ if necessary, the normalized smallness condition
can be written as
\[
\|u_0\|_{H^2}+\|b_0\|_{H^2}
\leq
c_0\min\left\{
\nu_m,
\frac{\nu_m^{\frac12}}{\nu_M^{\frac12}}
\right\}.
\]

Returning to the original variables and recalling that
\[
\widetilde\nu_m=\frac{\nu_m}{\gamma},
\qquad
\widetilde\nu_M=\frac{\nu_M}{\gamma},
\qquad
(v_0,h_0)=\frac1\gamma(u_0,b_0),
\]
we find that the normalized smallness condition becomes
\[
\|u_0\|_{H^2}+\|b_0\|_{H^2}
\leq
c_0\min\left\{
\nu_m,
\frac{\nu_m^{\frac12}}{\nu_M^{\frac12}}\gamma
\right\}.
\]
This is precisely the smallness condition in Theorem \ref{thm}.
Scaling back also yields the global bound stated in Theorem \ref{thm}, which completes the proof.

\section{Proof of Theorem \ref{cor}}\label{COR}

We again work in the normalized setting $\gamma=1$. The estimate
\eqref{E1^{pm}(t)} obtained in the previous section remains unchanged.
The improvement leading to Theorem \ref{cor} consists in replacing the
direct estimate \eqref{N_1234} for $N_1-N_4$ by a sharper estimate
based on the div--curl structure generated by the propagation in the
$x_2$-direction. The estimates \eqref{N_5} and \eqref{N_6} for $N_5$ and
$N_6$ will be used without modification. More precisely, we shall prove
\begin{align}
	\begin{split}
		E_0(t)
		\leq{}&
		C E(0)
		+C\nu_m^{-1}\big(E_1^+(t)+E_1^-(t)\big)\\
		&+
		C\frac{
			\nu_M^{\frac14}
			+\nu_M^{\frac12}\nu_m^{\frac14}}
		{\nu_m^{\frac34}}
		E^{\frac32}(t)
		+C\nu_m^{-1}E^2(t)
		+C\nu_m^{-1}E^{\frac74}(t).
	\end{split}
	\label{E_0(t)-new}
\end{align}
Compared with \eqref{E_0(t)}, the factor $\nu_m^{-1}$ in front of the
leading cubic term is replaced by $\nu_M^{\frac14}\nu_m^{-\frac34}$, at
the expense of the higher-order term $\nu_m^{-1}E^{\frac74}(t)$.

The section is organized as follows. In
Section~\ref{Estimate of energy E_0(t)-new} we prove \eqref{E_0(t)-new}.
We decompose $N_1+N_2$ into the terms $N_{11},\dots,N_{18}$ and bound each
of them by the energy dissipation times a space--time norm of the form
$\big\|\|K\Omega^\pm\|_{L^2_{x_1}}\|L\Omega^\mp\|_{L^2_{x_1}}\big\|_{L^2_tL^2_{x_2}}$
for a suitable pair $(L,K)$ of differential operators. To control these
norms, we derive, following \cite{JiangRenZhou2026}, two balance laws in
the $(t,x_2)$-plane for quadratic quantities of $(u,b)$ integrated in
$x_1$, apply the div--curl lemma (Lemma~\ref{div-curl lemma}), and
estimate the resulting source terms using only the horizontal
dissipation. The term $N_{15}$ requires an additional integration by
parts, and $N_3+N_4$ are treated by symmetry. In
Section~\ref{subsec:close3} we combine \eqref{E_0(t)-new} with
\eqref{E1^{pm}(t)}, close the estimate by a continuity argument, and
return to general $\gamma$, which gives Remark~\ref{rem:general},
Theorem~\ref{cor} and Corollary~\ref{cor1}.

\subsection{Refined estimate for $E_0(t)$}
\label{Estimate of energy E_0(t)-new}

\begin{proof}[Proof of \eqref{E_0(t)-new}]
Starting from \eqref{E0-preliminary}, we focus on the terms
$N_1-N_4$. We first consider $N_1+N_2$. Using $\nabla\cdot\Omega^\pm=0$ and the cancellation of the highest-order transport terms, we have
	\[
	\begin{aligned}
		N_1+N_2={}&(\partial_1^2\Omega^-_2\partial_2\Omega^+,\partial_1^2\Omega^+)_{L^2}
		+2(\partial_1\Omega^-_2\partial_2\partial_1\Omega^+,\partial_1^2\Omega^+)_{L^2}\\
		&+(\partial_1^2\Omega^-_1\partial_1\Omega^+,\partial_1^2\Omega^+)_{L^2}
		+2(\partial_1\Omega^-_1\partial_1^2\Omega^+,\partial_1^2\Omega^+)_{L^2}\\
		&+(\partial_2^2\Omega^-_1\partial_1\Omega^+_2,\partial_2^2\Omega^+_2)_{L^2}
		+2(\partial_2\Omega^-_1\partial_1\partial_2\Omega^+_2,\partial_2^2\Omega^+_2)_{L^2}\\
		&+(\partial_2^2\Omega^-_2\partial_2\Omega^+_2,\partial_2^2\Omega^+_2)_{L^2}
		+2(\partial_2\Omega^-_2\partial_2^2\Omega^+_2,\partial_2^2\Omega^+_2)_{L^2}\\
		\triangleq{}&N_{11}+N_{12}+N_{13}+N_{14}+N_{15}+N_{16}+N_{17}+N_{18}.
	\end{aligned}
	\]
	
	We first estimate $N_{11}$. By H\"older's inequality and anisotropic interpolation,
	\[
	\begin{aligned}
		\left|\int_0^tN_{11}(\tau)\,d\tau\right|
		&\lesssim
		\int_0^t
		\left\|
		\|\partial_1^2\Omega^-\|_{L^2_{x_1}}
		\|\partial_2\Omega^+\|_{L^2_{x_1}}
		\right\|_{L^2_{x_2}}
		\|\partial_1^2\Omega^+\|_{L^2_{x_2}L^\infty_{x_1}}\,d\tau\\
		&\lesssim
		\left\|
		\|\partial_1^2\Omega^-\|_{L^2_{x_1}}
		\|\partial_2\Omega^+\|_{L^2_{x_1}}
		\right\|_{L^2_tL^2_{x_2}}
		\|\partial_1\Omega^+\|_{L^2_tH^2_x}.
	\end{aligned}
	\]
	It therefore remains to control the first factor.
	
	Although the energy framework is formulated in the Els\"asser
	variables, the div--curl structure is more transparent in the velocity
	and magnetic variables $(u,b)$. We use these variables to derive two
	balance laws that control the mixed $(\Omega^+,\Omega^-)$ quantity above.
This construction is taken from the bilinear estimate of Jiang, Ren and
Zhou \cite[Lemma~3.4]{JiangRenZhou2026}, who consider the MHD system with
dissipation in all directions. Here only the horizontal dissipation is
available, and this is why we restrict ourselves to operators $L$
containing $\partial_1$; see the estimates of $I_{112}$--$I_{115}$
below.

Let $(L,K)$ be one of the following pairs of differential
	operators:
	\[
	(\partial_1^2,\partial_2),\qquad
	(\partial_1^2,\partial_1),\qquad
	(\partial_1\partial_2,\partial_1),\qquad
	(\partial_1,\partial_2).
	\]
	We derive a common space--time estimate for these four pairs. Fix one
	pair and suppress its dependence in the notation below.

		Applying $L$ to the equations for $u$ and $b$, taking the inner products with $Lu$ and $Lb$, respectively, and adding the resulting identities, we obtain
	\[
	\frac12\partial_t\big(|Lu|^2+|Lb|^2\big)
	-\partial_2(Lu \cdot Lb)
	=\widetilde G_1,
	\]
	where
	\[
	\begin{aligned}
	\widetilde G_1\triangleq{}&
	L(b\cdot\nabla b-u\cdot\nabla u-\nabla p)\cdot Lu
	+L(b\cdot\nabla u-u\cdot\nabla b)\cdot Lb\\
	&+\nu\partial_1^2 Lu\cdot  Lu
	+\mu\partial_1^2 L b\cdot Lb.
	\end{aligned}
	\]
	Integrating in $x_1$ gives
	\[
	\partial_t\left(\frac12\int_{\mathbb R_{x_1}}\big(|Lu|^2+|Lb|^2\big)\,dx_1\right)
	+\partial_2\left(-\int_{\mathbb R_{x_1}}L u\cdot L b\,dx_1\right)
	=\int_{\mathbb R_{x_1}}\widetilde G_1\,dx_1.
	\]
	
	Similarly, applying $K$ to the equations for $u$ and $b$, taking the inner products with $Kb$ and $Ku$, respectively, and adding, we obtain
	\[
	\partial_t(Ku\cdot Kb)
	-\frac12\partial_2\big(|Ku|^2+|Kb|^2\big)
	=\widetilde G_2,
	\]
	where
	\[
	\begin{aligned}
	\widetilde G_2\triangleq{}&
	K(b\cdot\nabla b-u\cdot\nabla u-\nabla p)\cdot Kb
	+K(b\cdot\nabla u-u\cdot\nabla b)\cdot Ku\\
	&+\nu\partial_1^2Ku\cdot Kb
	+\mu\partial_1^2 K b\cdot Ku.
	\end{aligned}
	\]
	Hence
	\[
	\partial_t\left(\int_{\mathbb R_{x_1}}Ku\cdot Kb\,dx_1\right)
	-\partial_2\left(\frac12\int_{\mathbb R_{x_1}}\big(| K u|^2+| Kb|^2\big)\,dx_1\right)
	=\int_{\mathbb R_{x_1}}\widetilde G_2\,dx_1.
	\]
	
	Set
	\[
	\begin{aligned}
	f_{11}&\triangleq\frac12\int_{\mathbb R_{x_1}}\big(|Lu|^2+|Lb|^2\big)\,dx_1,
	&\qquad
	f_{12}&\triangleq-\int_{\mathbb R_{x_1}}Lu\cdot Lb\,dx_1,\\
	f_{21}&\triangleq\int_{\mathbb R_{x_1}}Ku\cdot Kb\,dx_1,
	&\qquad
	f_{22}&\triangleq\frac12\int_{\mathbb R_{x_1}}\big(|Ku|^2+|Kb|^2\big)\,dx_1,
	\end{aligned}
	\]
	and
	\[
	G_1\triangleq\int_{\mathbb R_{x_1}}\widetilde G_1\,dx_1,
	\qquad
	G_2\triangleq\int_{\mathbb R_{x_1}}\widetilde G_2\,dx_1.
	\]
By integration by parts in the $x_1$-direction,
	\[
	\begin{aligned}
		G_1={}&\int_{\mathbb R_{x_1}}
		\Bigl[L(b\cdot\nabla b-u\cdot\nabla u-\nabla p)\cdot Lu
		+L(b\cdot\nabla u-u\cdot\nabla b)\cdot Lb\\
		&-\nu|\partial_1Lu|^2
		-\mu|\partial_1Lb|^2\Bigr]\,dx_1.
	\end{aligned}
	\]
	The diffusive part of $G_2$ is similarly given by
	\[
	G_2^{\mathrm d}
	=-(\nu+\mu)\int_{\mathbb R_{x_1}}
	\partial_1Ku\cdot\partial_1Kb\,dx_1.
	\]
	Then
	\[
	\partial_tf_{11}+\partial_2f_{12}=G_1,
	\qquad
	\partial_tf_{21}-\partial_2f_{22}=G_2.
	\]
	A direct computation gives
	\[
	\frac18\left(
	\|K\Omega^+\|_{L^2_{x_1}}^2\|L\Omega^-\|_{L^2_{x_1}}^2
	+\|K\Omega^-\|_{L^2_{x_1}}^2\|L\Omega^+\|_{L^2_{x_1}}^2
	\right)
	=f_{11}f_{22}+f_{12}f_{21}.
	\]
	In particular, $f_{11}f_{22}+f_{12}f_{21}\geq0$, and we can apply Lemma \ref{div-curl lemma} with $y=x_2$.

	To estimate the source terms, write
	\[
	G_i=G_i^{\mathrm{nl}}+G_i^{\mathrm{d}},\qquad i=1,2,
	\]
	where the superscripts denote the nonlinear and diffusive parts, respectively, and set
	\[
	F_{11}(x_2,\tau)=\int_{-\infty}^{x_2}f_{11}(z,\tau)\,dz,
	\qquad
	F_{21}(x_2,\tau)=\int_{x_2}^{+\infty}f_{21}(z,\tau)\,dz.
	\]
	Lemma \ref{div-curl lemma} then yields
	\[
	\begin{aligned}
		&\quad\left\|
		\|K\Omega^+\|_{L^2_{x_1}}
		\|L\Omega^-\|_{L^2_{x_1}}
		\right\|_{L^2_tL^2_{x_2}}^2
		+
		\left\|
		\|K\Omega^-\|_{L^2_{x_1}}
		\|L\Omega^+\|_{L^2_{x_1}}
		\right\|_{L^2_tL^2_{x_2}}^2\\
		&\lesssim I_{111}+I_{112}+I_{113}+I_{114}+I_{115},
	\end{aligned}
	\]
	where
	\[
	\begin{aligned}
		I_{111}&\triangleq\sup_{0\leq\tau\leq t}\|f_{11}(\tau)\|_{L^1_{x_2}}\|f_{21}(\tau)\|_{L^1_{x_2}},\\
		I_{112}&\triangleq\left|\int_0^t\int_{\mathbb R}F_{11}G_2^{\mathrm{nl}}\,dx_2\,d\tau\right|,
		\qquad
		I_{113}\triangleq\left|\int_0^t\int_{\mathbb R}F_{11}G_2^{\mathrm{d}}\,dx_2\,d\tau\right|,\\
		I_{114}&\triangleq\left|\int_0^t\int_{\mathbb R}F_{21}G_1^{\mathrm{nl}}\,dx_2\,d\tau\right|,
		\qquad
		I_{115}\triangleq\left|\int_0^t\int_{\mathbb R}F_{21}G_1^{\mathrm{d}}\,dx_2\,d\tau\right|.
	\end{aligned}
	\]
	The first term is bounded by
	\begin{align}\label{I_111}
		I_{111}
		\lesssim
		\|(u|b)\|_{L^\infty_tH^2_x}^4.
	\end{align}

We next estimate the nonlinear part of $G_2$. Since $K$ is
a first-order operator, the standard product estimates in two dimensions
and the pressure equation
\[
-\Delta p=\nabla\cdot\nabla\cdot(u\otimes u-b\otimes b)
\]
give
\[
\begin{aligned}
&\|K(b\cdot\nabla b-u\cdot\nabla u-\nabla p)\|_{L^2}
+\|K(b\cdot\nabla u-u\cdot\nabla b)\|_{L^2}\lesssim \|(u|b)\|_{H^2_x}^2.
\end{aligned}
\]
Consequently,
\[
\|G_2^{\mathrm{nl}}\|_{L^1_{x_2}}
\lesssim \|(u|b)\|_{H^2_x}^3.
\]
Moreover,
\[
\|F_{11}\|_{L^\infty_{x_2}}
\leq \|f_{11}\|_{L^1_{x_2}}
\lesssim \|(Lu|Lb)\|_{L^2_x}^2.
\]
Because every admissible $L$ contains at least one $\partial_1$, it
follows that
\begin{align}\label{I_112}
I_{112}
&\lesssim
\int_0^t
\|(Lu|Lb)\|_{L^2_x}^2
\|(u|b)\|_{H^2_x}^3\,d\tau\lesssim
\|(u|b)\|_{L^\infty_tH^2_x}^3
\|(\partial_1u|\partial_1b)\|_{L^2_tH^2_x}^2.
\end{align}

For $G_1^{\mathrm{nl}}$, each admissible $L$ has order at most two
and contains at least one $\partial_1$. Hence, for smooth vector fields
$v$ and $w$,
\[
\|L(v\cdot\nabla w)\|_{L^2}
\lesssim
\|v\|_{H^2}\|\partial_1w\|_{H^2}
+\|\partial_1v\|_{H^2}\|w\|_{H^2}.
\]
The pressure equation also gives
\[
\|L\nabla p\|_{L^2}
\lesssim
\|(u|b)\|_{H^2_x}
\|(\partial_1u|\partial_1b)\|_{H^2_x}.
\]
Therefore,
\[
\|G_1^{\mathrm{nl}}\|_{L^1_{x_2}}
\lesssim
\|(u|b)\|_{H^2_x}
\|(\partial_1u|\partial_1b)\|_{H^2_x}^2.
\]
Since
\[
\|F_{21}\|_{L^\infty_{x_2}}
\leq\|f_{21}\|_{L^1_{x_2}}
\lesssim\|(Ku|Kb)\|_{L^2_x}^2
\lesssim\|(u|b)\|_{H^2_x}^2,
\]
we obtain
\begin{align}\label{I_114}
I_{114}
&\lesssim
\int_0^t
\|(u|b)\|_{H^2_x}^3
\|(\partial_1u|\partial_1b)\|_{H^2_x}^2\,d\tau\lesssim
\|(u|b)\|_{L^\infty_tH^2_x}^3
\|(\partial_1u|\partial_1b)\|_{L^2_tH^2_x}^2.
\end{align}

For the diffusive part of $G_2$, the identity above for
$G_2^{\mathrm d}$ gives
\begin{align}\label{I_113}
I_{113}
\lesssim{}&
(\nu+\mu)\|(u|b)\|_{L^\infty_tH^2_x}^2
\|\partial_1u\|_{L^2_tH^2_x}
\|\partial_1b\|_{L^2_tH^2_x}.
\end{align}
For the diffusive part of $G_1$, we similarly have
\begin{align}\label{I_115}
I_{115}
\lesssim{}&
\|(u|b)\|_{L^\infty_tH^2_x}^2
\left(
\nu\|\partial_1u\|_{L^2_tH^2_x}^2
+\mu\|\partial_1b\|_{L^2_tH^2_x}^2
\right).
\end{align}

Combining \eqref{I_111}--\eqref{I_115} and using the
equivalence between the $(u,b)$ and $(\Omega^+,\Omega^-)$ norms, we
first obtain
\[
\begin{aligned}
&\left\|
\|K\Omega^+\|_{L^2_{x_1}}
\|L\Omega^-\|_{L^2_{x_1}}
\right\|_{L^2_tL^2_{x_2}}^2
+
\left\|
\|K\Omega^-\|_{L^2_{x_1}}
\|L\Omega^+\|_{L^2_{x_1}}
\right\|_{L^2_tL^2_{x_2}}^2\\
&\qquad\lesssim
E_0^2(t)+\nu_m^{-1}E_0^{\frac52}(t)
+\nu_M^{\frac12}\nu_m^{-\frac12}E_0^2(t).
\end{aligned}
\]
Returning to the estimate of $N_{11}$, we obtain
\[
\begin{aligned}
\left|\int_0^tN_{11}(\tau)\,d\tau\right|
\lesssim{}&
\nu_m^{-\frac12}E_0^{\frac32}(t)
+\nu_m^{-1}E_0^{\frac74}(t)
+\nu_M^{\frac14}\nu_m^{-\frac34}E_0^{\frac32}(t).
\end{aligned}
\]

Applying the preceding estimate with the operator pairs listed
above, and using the divergence-free identities where necessary, gives
the same bound for $N_{12}$, $N_{13}$, $N_{14}$, $N_{16}$, $N_{17}$,
and $N_{18}$.
More precisely, the relevant pairs are
\[
\begin{array}{c|c}
\text{terms} & (L,K)\\ \hline
N_{11},N_{16} & (\partial_1^2,\partial_2)\\
N_{13},N_{14} & (\partial_1^2,\partial_1)\\
N_{12},N_{17},N_{18} & (\partial_1\partial_2,\partial_1).
\end{array}
\]
The only term requiring a slight modification is $N_{15}$, since a direct estimate would involve the uncontrolled derivative $\partial_2^2\Omega^-_1$. Integrating by parts in $x_2$, we write
	\[
	N_{15}
	=-(\partial_2\Omega^-_1\partial_2\partial_1\Omega^+_2,\partial_2^2\Omega^+_2)_{L^2}
	-(\partial_2\Omega^-_1\partial_1\Omega^+_2,\partial_2^3\Omega^+_2)_{L^2}
	\triangleq N_{151}+N_{152}.
	\]
The term $N_{151}$ is treated in the same way as $N_{11}$,
	with $(L,K)=(\partial_1^2,\partial_2)$. For $N_{152}$, using
	\[
	\partial_2^3\Omega_2^+=-\partial_1\partial_2^2\Omega_1^+,
	\]
	together with anisotropic interpolation, we have
	\[
	\begin{aligned}
		&\quad\int_0^t|N_{152}(\tau)|\,d\tau\\
		&\lesssim
		\left\|
		\|\partial_2\Omega^-_1\|_{L^\infty_{x_1}}
		\|\partial_1\Omega^+_2\|_{L^2_{x_1}}
		\right\|_{L^2_tL^2_{x_2}}
		\|\partial_1\Omega^+\|_{L^2_tH^2_x}\\
		&\lesssim
		\left\|
		\|\partial_2\Omega^-_1\|_{L^2_{x_1}}
		\|\partial_1\Omega^+\|_{L^2_{x_1}}
		\right\|_{L^2_tL^2_{x_2}}^{\frac12}
		\left\|
		\|\partial_1\partial_2\Omega^-_1\|_{L^2_{x_1}}
		\|\partial_1\Omega^+\|_{L^2_{x_1}}
		\right\|_{L^2_tL^2_{x_2}}^{\frac12}
		\|\partial_1\Omega^+\|_{L^2_tH^2_x}\\
		&\lesssim
		\left(
		\left\|
		\|\partial_2\Omega^-_1\|_{L^2_{x_1}}
		\|\partial_1\Omega^+\|_{L^2_{x_1}}
		\right\|_{L^2_tL^2_{x_2}}
		+
		\left\|
		\|\partial_1\partial_2\Omega^-_1\|_{L^2_{x_1}}
		\|\partial_1\Omega^+\|_{L^2_{x_1}}
		\right\|_{L^2_tL^2_{x_2}}
		\right)
		\|\partial_1\Omega^+\|_{L^2_tH^2_x}.
	\end{aligned}
	\]
The two mixed norms on the right-hand side are controlled by
the preceding div--curl estimate together with the dissipative
$x_1$-derivatives. They correspond, respectively, to
$(L,K)=(\partial_1,\partial_2)$ and
$(L,K)=(\partial_1\partial_2,\partial_1)$.
	Consequently,
	\begin{align}\label{N_1,N_2}
		\int_0^t\big(|N_1(\tau)|+|N_2(\tau)|\big)\,d\tau
		\lesssim
		\nu_m^{-\frac12}E_0^{\frac32}(t)
		+\nu_m^{-1}E_0^{\frac74}(t)
		+\nu_M^{\frac14}\nu_m^{-\frac34}E_0^{\frac32}(t).
	\end{align}
	By symmetry,
	\begin{align}\label{N_3,N_4}
		\int_0^t\big(|N_3(\tau)|+|N_4(\tau)|\big)\,d\tau
		\lesssim
		\nu_m^{-\frac12}E_0^{\frac32}(t)
		+\nu_m^{-1}E_0^{\frac74}(t)
		+\nu_M^{\frac14}\nu_m^{-\frac34}E_0^{\frac32}(t).
	\end{align}
	
	Finally, combining \eqref{E0-preliminary}, \eqref{N_1,N_2}, \eqref{N_3,N_4}, \eqref{N_5}, and \eqref{N_6}, and using $\nu_M\geq\nu_m$, we obtain
	\[
	E_0(t)
	\lesssim
	E(0)
	+\nu_m^{-1}\big(E_1^+(t)+E_1^-(t)\big)
	+\frac{\nu_M^{\frac14}+\nu_M^{\frac12}\nu_m^{\frac14}}{\nu_m^{\frac34}}E^{\frac32}(t)
	+\nu_m^{-1}E^2(t)
	+\nu_m^{-1}E^{\frac74}(t).
	\]
	This is precisely \eqref{E_0(t)-new}.
\end{proof}

\subsection{Proof of Theorem \ref{cor} and Corollary \ref{cor1}}\label{subsec:close3}

We now close the refined estimate. Combining \eqref{E_0(t)-new} with
\eqref{E1^{pm}(t)} and the definition of $E(t)$ gives
\[
\begin{aligned}
E(t)\leq{}&
C E(0)
+C\frac{\nu_M^{\frac14}+\nu_M^{\frac12}\nu_m^{\frac14}}
{\nu_m^{\frac34}}E^{\frac32}(t)
+C\frac{\nu_M^{\frac12}}{\nu_m^{\frac32}}E^2(t)+C\nu_m^{-1}E^{\frac74}(t)
+C\nu_m^{-2}E^{\frac52}(t).
\end{aligned}
\]
Set
\[
\delta_1=\min\left\{
\nu_m^{\frac43},
\frac{\nu_m}{\nu_M},
\frac{\nu_m^{\frac32}}{\nu_M^{\frac12}}
\right\}.
\]
By the definition of $\delta_1$,
\[
\frac{\nu_M^{\frac14}+\nu_M^{\frac12}\nu_m^{\frac14}}
{\nu_m^{\frac34}}\delta_1^{\frac12}
+\frac{\nu_M^{\frac12}}{\nu_m^{\frac32}}\delta_1
+\nu_m^{-1}\delta_1^{\frac34}
+\nu_m^{-2}\delta_1^{\frac32}
\lesssim1.
\]
Hence, under the bootstrap assumption $E(t)\leq c_1\delta_1$ on
$[0,T]$, all nonlinear terms can be absorbed once $c_1>0$ is chosen
sufficiently small. As in the proof of Theorem \ref{thm}, a standard
continuity argument yields
\[
E(t)\lesssim E(0),\qquad t\geq0,
\]
provided $E(0)\leq c_0^2\delta_1$ for a sufficiently small $c_0>0$.
Equivalently, the normalized problem is globally controlled under
\begin{equation}\label{normalized refined threshold}
\|u_0\|_{H^2}+\|b_0\|_{H^2}
\leq c_0\min\left\{
\nu_m^{\frac23},
\frac{\nu_m^{\frac12}}{\nu_M^{\frac12}},
\frac{\nu_m^{\frac34}}{\nu_M^{\frac14}}
\right\}.
\end{equation}

We now return to the original variables. No restriction on the size of
$\nu$, $\mu$ or $\gamma$ has been used so far. Applying
\eqref{normalized refined threshold} to the scaled system
\eqref{scaling system}, with
\[
\widetilde\nu_m=\frac{\nu_m}{\gamma},
\qquad
\widetilde\nu_M=\frac{\nu_M}{\gamma},
\]
and scaling back, we obtain, for all $\nu,\mu,\gamma>0$, the sharper
condition
\begin{align}\label{general-gamma-sharp}
	\|u_0\|_{H^2}+\|b_0\|_{H^2}
	\leq
	c_0\min\left\{
	\nu_m^{\frac23}\gamma^{\frac13},
	\frac{\nu_m^{\frac12}}{\nu_M^{\frac12}}\gamma,
	\frac{\nu_m^{\frac34}}{\nu_M^{\frac14}}\gamma^{\frac12}
	\right\}.
\end{align}
This proves Remark \ref{rem:general}. If, in addition, $0<\nu,\mu\leq1$
and $\gamma\geq1$, then
\[
\frac{\nu_m^{\frac12}}{\nu_M^{\frac12}}\gamma
\geq
\nu_m^{\frac34}\gamma^{\frac12},
\qquad
\frac{\nu_m^{\frac34}}{\nu_M^{\frac14}}\gamma^{\frac12}
\geq
\nu_m^{\frac34}\gamma^{\frac12}.
\]
Therefore, \eqref{general-gamma-sharp} follows from
\[
\|u_0\|_{H^2}+\|b_0\|_{H^2}
\leq
c_0\min\left\{
\nu_m^{\frac23}\gamma^{\frac13},
\nu_m^{\frac34}\gamma^{\frac12}
\right\}.
\]
The global estimate follows from the normalized estimate by the same
scaling. This proves Theorem \ref{cor}.

Finally, suppose that
\[
\nu_M\leq\beta\nu_m^{\frac13}.
\]
Then
\begin{align*}
	\frac{\nu_m^{\frac12}}{\nu_M^{\frac12}}\gamma
	&\geq
	\beta^{-\frac12}\nu_m^{\frac13}\gamma
	\geq
	\beta^{-\frac12}\nu_m^{\frac23}\gamma^{\frac13},\\
	\frac{\nu_m^{\frac34}}{\nu_M^{\frac14}}\gamma^{\frac12}
	&\geq
	\beta^{-\frac14}\nu_m^{\frac23}\gamma^{\frac12}
	\geq
	\beta^{-\frac14}\nu_m^{\frac23}\gamma^{\frac13},
\end{align*}
where we used $\nu_m\leq1$ and $\gamma\geq1$. Consequently,
\eqref{general-gamma-sharp} holds provided that
\[
\|u_0\|_{H^2}+\|b_0\|_{H^2}
\leq
c_\beta\nu_m^{\frac23}\gamma^{\frac13},
\]
where $c_\beta>0$ is sufficiently small and depends only on $\beta$.
This proves Corollary \ref{cor1}.

\appendix
\refstepcounter{section}
\section*{Appendix \thesection. A div--curl estimate}\label{app:divcurl}

We record the div--curl estimate used in Section~\ref{COR}. It is
Lemma~2.3 in \cite{LaiShaoZhou2025} (see also \cite{WangZhou2023}), and it
was used for the MHD equations in \cite[Lemma~3.3]{JiangRenZhou2026}. We
include the short proof for the reader's convenience. The name reflects the fact that $(f_{11},f_{12})$ is
divergence free in the $(t,y)$-plane up to $G_1$, while $(f_{22},f_{21})$
is curl free up to $G_2$; the integrand $f_{11}f_{22}+f_{12}f_{21}$ is
their Euclidean inner product (compare \cite{Murat1978,Tartar1979}). The
functional $Q(t)$ in the proof is an interaction potential in the spirit
of \cite{Glimm1965}.

\begin{lem}\label{div-curl lemma}
	Let $T>0$. Suppose that $f_{ij}$, $i,j\in\{1,2\}$, and $G_i$,
	$i\in\{1,2\}$, are smooth real-valued functions on
	$\mathbb R\times[0,T]$, with sufficient decay and integrability to
	justify the integrations below. Assume that
	\begin{equation}\label{div-curl-system}
		\left\{
		\begin{aligned}
			\partial_t f_{11}+\partial_y f_{12}&=G_1,\\
			\partial_t f_{21}-\partial_y f_{22}&=G_2.
		\end{aligned}
		\right.
	\end{equation}
	Then
	\begin{align*}
	\left|\int_0^T\int_{\mathbb R}
		\bigl(f_{11}f_{22}+f_{12}f_{21}\bigr)\,dy\,dt\right|
		&\leq
		2\sup_{0\leq t\leq T}
		\|f_{11}(t)\|_{L^1(\mathbb R)}
		\|f_{21}(t)\|_{L^1(\mathbb R)}\\
		&\quad+
		\left|\int_0^T\int_{\mathbb R}
		\left(\int_{-\infty}^y f_{11}(z,t)\,dz\right)
		G_2(y,t)\,dy\,dt\right|
		\\
		&\quad+
		\left|\int_0^T\int_{\mathbb R}
		\left(\int_y^\infty f_{21}(z,t)\,dz\right)
		G_1(y,t)\,dy\,dt\right|.
	\end{align*}
\end{lem}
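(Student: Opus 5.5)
The plan is to build an interaction potential of Glimm type, following the same idea as the weight $W$ used for $E_1^\pm$ in Section~\ref{Estimate of energy E_1(t)}. Set
\[
F_{11}(y,t)=\int_{-\infty}^y f_{11}(z,t)\,dz,\qquad F_{21}(y,t)=\int_y^{\infty} f_{21}(z,t)\,dz,
\]
and consider
\[
Q(t)=\int_{\mathbb R}F_{11}(y,t)\,f_{21}(y,t)\,dy .
\]
Since $|F_{11}|\le\|f_{11}\|_{L^1}$, we have $|Q(t)|\le\|f_{11}(t)\|_{L^1}\|f_{21}(t)\|_{L^1}$. Therefore $|Q(T)-Q(0)|$ is bounded by the first term on the right-hand side, including the factor $2$. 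The whole proof then reduces to showing that $Q'(t)$ equals $\int(f_{11}f_{22}+f_{12}f_{21})\,dy$ up to source contributions, with a definite sign.

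To compute $Q'$, we differentiate in time and use \eqref{div-curl-system}. For the first factor,
\[
\partial_tF_{11}(y)=\int_{-\infty}^y(G_1-\partial_zf_{12})\,dz=\int_{-\infty}^yG_1\,dz-f_{12}(y),
\]
using decay at $-\infty$. For the second factor, $\partial_tf_{21}=G_2+\partial_yf_{22}$. Hence
\[
Q'=\int\Big(\int_{-\infty}^yG_1\Big)f_{21}\,dy-\int f_{12}f_{21}\,dy+\int F_{11}G_2\,dy+\int F_{11}\partial_yf_{22}\,dy .
\]
In the last term we integrate by parts in $y$; the boundary terms vanish because $F_{11}$ is bounded and $f_{22}$ decays. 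This gives $-\int f_{11}f_{22}\,dy$. In the first term we use Fubini:
\[
\int_{\mathbb R}\Big(\int_{-\infty}^yG_1(z)\,dz\Big)f_{21}(y)\,dy=\int_{\mathbb R}G_1(z)\Big(\int_z^\infty f_{21}(y)\,dy\Big)dz=\int F_{21}G_1\,dz .
\]
Collecting these gives the identity
\[
\int_{\mathbb R}\big(f_{11}f_{22}+f_{12}f_{21}\big)\,dy=-Q'(t)+\int F_{11}G_2\,dy+\int F_{21}G_1\,dy .
\]

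Integrating this identity over $[0,T]$ and taking absolute values yields the claimed bound. The integral of $-Q'$ contributes $|Q(T)|+|Q(0)|\le 2\sup_t\|f_{11}\|_{L^1}\|f_{21}\|_{L^1}$, and the two source terms appear exactly as in the statement.

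The only delicate points are the signs, which are fixed by the choice of $F_{11}$ integrating from $-\infty$ and $F_{21}$ from $+\infty$, and the justification of the boundary terms and of Fubini. Both of these are covered by the decay and integrability hypotheses assumed in the lemma. I expect the sign bookkeeping in the integration by parts, rather than any analytic estimate, to be the main thing to check carefully.
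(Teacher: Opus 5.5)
Your proof is correct and is the paper's argument: your potential $\int F_{11}f_{21}\,dy$ equals the paper's $Q(t)=\int f_{11}F_{21}\,dy$ by Fubini, and the paper also concludes from the identity $\int(f_{11}f_{22}+f_{12}f_{21})\,dy=-Q'+\int F_{11}G_2\,dy+\int F_{21}G_1\,dy$ together with $|Q(t)|\le\|f_{11}\|_{L^1}\|f_{21}\|_{L^1}$. The only difference is cosmetic: you differentiate $F_{11}$ and $f_{21}$, so your integration by parts falls on the $f_{22}$ term and Fubini on the $G_1$ term, whereas the paper differentiates $f_{11}$ and $F_{21}$, so its integration by parts falls on the $f_{12}$ term and Fubini on the $G_2$ term.
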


\begin{proof}
	Define
	\[
	Q(t)
	=
	\int_{\mathbb R}f_{11}(y,t)
	\left(\int_y^\infty f_{21}(z,t)\,dz\right)\,dy.
	\]
	Differentiating $Q(t)$ and using \eqref{div-curl-system}, we obtain
	\begin{align*}
		\frac{d}{dt}Q(t)
		={}&
		\int_{\mathbb R}
		\bigl(G_1-\partial_yf_{12}\bigr)(y,t)
		\left(\int_y^\infty f_{21}(z,t)\,dz\right)\,dy
		\\
		&+
		\int_{\mathbb R}f_{11}(y,t)
		\left(\int_y^\infty
		\bigl(G_2+\partial_zf_{22}\bigr)(z,t)\,dz\right)\,dy.
	\end{align*}
	Integration by parts in $y$ and Fubini's theorem then give
	\begin{align*}
		\frac{d}{dt}Q(t)
		={}&
		-\int_{\mathbb R}
		\bigl(f_{11}f_{22}+f_{12}f_{21}\bigr)(y,t)\,dy
		\\
		&+
		\int_{\mathbb R}
		\left(\int_{-\infty}^y f_{11}(z,t)\,dz\right)
		G_2(y,t)\,dy
		+
		\int_{\mathbb R}
		\left(\int_y^\infty f_{21}(z,t)\,dz\right)
		G_1(y,t)\,dy.
	\end{align*}
	Integrating this identity from $0$ to $T$ and taking absolute values
	produces the two source terms in the asserted estimate together with
	the boundary term $|Q(0)-Q(T)|$. For every $t\in[0,T]$,
	\[
	|Q(t)|
	\leq
	\|f_{11}(t)\|_{L^1(\mathbb R)}
	\|f_{21}(t)\|_{L^1(\mathbb R)}.
	\]
	Therefore,
	\[
	|Q(0)-Q(T)|
	\leq
	2\sup_{0\leq t\leq T}
	\|f_{11}(t)\|_{L^1(\mathbb R)}
	\|f_{21}(t)\|_{L^1(\mathbb R)},
	\]
	which completes the proof.
\end{proof}

\end{document}